\documentclass[12 pt]{amsart}
\usepackage{amsmath,amssymb,amsthm,hyperref}
\usepackage{longtable,}
\usepackage{a4wide}
\pagestyle{plain}
\usepackage{color}
\usepackage{enumerate}
\usepackage{cleveref}

\usepackage{tikz}
%\usetikzlibrary{calc,through,backgrounds}
%\usetikzlibrary{arrows,decorations.pathmorphing,backgrounds,fit}
%\usetikzlibrary{shapes.geometric} 
\usetikzlibrary{decorations.pathreplacing}

\newtheorem{thm}{Theorem}[section]
\newtheorem*{thm*}{Theorem}
\newtheorem*{conj*}{Conjecture}
\newtheorem{cor}[thm]{Corollary}

\newtheorem{lem}[thm]{Lemma}
\newtheorem{prop}[thm]{Proposition}

\theoremstyle{remark}

\theoremstyle{definition}

\newtheorem{defn}[thm]{Definition}
\newcounter{claim}[thm]

				%group generated by
	%p-core for any prime any group
				%normaliser
				%centraliser
		% stab of dist from any vert
			%stabilizer of distance from x
			%stab of dist from y
	% sylow subgps

\DeclareMathOperator{\Aut}{Aut}
\DeclareMathOperator{\Cay}{Cay}

\newcommand{\ZZ}{\mathbb{Z}}

\crefname{lem}{lemma}{lemmas}
\Crefname{lem}{Lemma}{Lemmas}
\crefmultiformat{lem}{Lemmas~#2#1#3}{ and~#2#1#3}{, #2#1#3}{, and~#2#1#3}

\title{Most Rigid Representations and Cayley Index}
% \footnote{ T\lowercase{his research forms part of the  \uppercase{A}ustralian \uppercase{R}esearch \uppercase{C}ouncil \uppercase{D}iscovery \uppercase{P}roject   \uppercase{DP}120100446  of the first author and part of the  \uppercase{ARC} \uppercase{D}iscovery \uppercase{E}arly \uppercase{C}areer \uppercase{R}esearch \uppercase{A}ward \uppercase{DE}160100081 of the second.}}}

\author{Joy Morris}
\author{Josh Tymburski}

\address{Department of Mathematics and Computer Science\\
University of Lethbridge\\
Lethbridge, AB T1K 3M4\\
Canada}

\email{joy.morris@uleth.ca}\email{josh.tymburski@uleth.ca}	
\subjclass[2010]{Primary 05C25}
\begin{document}

\begin{abstract}
For any finite group $G$, a natural question to ask is the order of the smallest possible automorphism group for a Cayley graph on $G$. A particular Cayley graph whose automorphism group has this order is referred to as an MRR (Most Rigid Representation), and its Cayley index is a numerical indicator of this value. Study of GRRs showed that with the exception of two infinite families and seven individual groups, every group admits a Cayley graph whose MRR is a GRR, so that the Cayley index is $1$. The full answer to the question of finding the smallest possible Cayley index for a Cayley graph on a fixed group was almost completed in previous work, but the precise answers for some finite groups and one infinite family of groups were left open. We fill in the remaining gaps to completely answer this question.
\end{abstract}

\maketitle

\section{Introduction}
All groups and graphs in this paper are finite. All of our graphs are simple, undirected, and have no loops.

A Cayley graph $\Gamma=\Cay(G,S)$ where $S \subseteq G$ with $S=S^{-1}$ and $1 \notin S$, is the graph whose vertices are the elements of $G$, with $(g,gs) \in E(\Gamma)$ if and only if $g \in G$ and $s \in S$. We refer to $S$ as the connection set for $\Gamma$. Let $A=\Aut(\Gamma)$. Observe that $L_G$, the left-regular representation of $G$, lies in $A$, so $|G|$ divides $|A|$. 

\begin{defn}
The \emph{Cayley index} $c(\Gamma)$ of the Cayley graph $\Gamma=\Cay(G,S)$, is $|A:L_G|$. The \emph{Cayley index} $c(G)$ of the group $G$ is $\min_{S \subseteq G, S=S^{-1}} c(\Cay(G,S))$; that is, the lowest Cayley index of any Cayley graph on the group $G$.
\end{defn} 

\begin{defn}
A Cayley graph $\Gamma=\Cay(G,S)$ is a GRR (Graphical Regular Representation) for $G$ if $c(\Gamma)=1$.
\end{defn}

Thus, groups that admit GRRs are precisely the groups whose Cayley index is $1$. In order to completely characterise these groups, we require another definition.

\begin{defn}
Let $A$ be an abelian group of even order, and $y$ an involution in $A$. Then the generalised dicyclic group $Dic(A,y,x)$ is $\langle A,x\rangle$ where $x \notin A$, $x^2=y$, and $x^{-1}ax=a^{-1}$ for every $a \in A$.
\end{defn}

The study of GRRs involved many researchers and papers. Some of the most influential work along the way appeared in \cite{Imrich1, Imrich2, Imrich3}. Watkins \cite{Watkins} observed that there are two infinite families of graphs that cannot admit GRRs: generalised dicyclic groups, and abelian groups that are not elementary abelian $2$-groups. Determining groups that admit GRRs was completed by Hetzel \cite{Hetzel} and Godsil \cite{Godsil}. Hetzel proved that aside from the two infinite families noted by Watkins, there are exactly 13 small solvable groups (of order at most 32) that do not admit a GRR. Godsil showed that every non-solvable group admits a GRR.

In the case where a group fails to admit a GRR, a natural question to ask is: what is the Cayley index of the group, and what is a Cayley graph on the group that has that Cayley index? The following terminology was coined in \cite{ImrichW}.

\begin{defn}
Let $G$ be a group with $c(G)>1$, and let $\Gamma=\Cay(G,S)$ be a Cayley graph on $G$ with $c(\Gamma)=c(G)$. Then we say that $\Gamma$ is an MRR (Most Rigid Representation) for $G$.
\end{defn}

The bulk of this paper is divided into 4 sections. In~\Cref{sec:exceptional}, we describe the groups that do not admit a GRR but do not lie in either of the infinite families of groups that do not admit a GRR. For each of these groups, we find its Cayley index and an MRR. In~\Cref{sec:abelian}, we find the Cayley index of every abelian group, and find MRRs for those groups whose Cayley index is greater than $2$. In~\Cref{sec:Q}, we consider a subfamily of generalised dicyclic groups (specifically, the hamiltonian $2$-groups), and show that the smallest two of these have Cayley index $16$, while the rest have Cayley index $8$.
Finally, in~\Cref{sec:dicyclic}, we find the Cayley index for every generalised dicyclic group that was not included in~\Cref{sec:Q}.

Much of the work that we summarise in this paper was done in~\cite{ImrichW}, but they left some gaps. Our paper fills all of these gaps, thus completing their work. Specifically, we fill the following gaps in their work. We examine the Cayley indices of the groups that do not lie in either of the infinite families; we give the Cayley indices for the four abelian groups for which they did not specify it (although they stated that these had been found by computer); we find the precise Cayley index for generalised dicyclic groups of order at most 96 (they bounded almost all of these by 4, but most in fact have Cayley index $2$); and we find the Cayley indices for all hamiltonian $2$-groups (they bounded these by 16, but almost all have Cayley index $8$). \Cref{table} summarises this work, providing the Cayley index for every finite group. 

For a number of the small individual groups, we found MRRs using Sage~\cite{sage} and its GAP package~\cite{GAP4}. The Cayley index of any of the graphs we present can be easily checked via computer, using this or other appropriate software.

Throughout this paper, $Q_8=\{\pm 1, \pm i, \pm j, \pm k\}$ is the usual representation of the quaternion group of order $8$. We use $D_{2n}$ for $n \ge 3$ to represent the dihedral group of order $2n$. Four of the exceptional groups listed in \Cref{exceptional} we denote by $H_i$ for $i \in \{1,2,3,4\}$; a precise representation of each of these groups is given in \Cref{exceptional}. 

To represent some of our MRRs, we use cartesian products. For two graphs $\Gamma_1$ and $\Gamma_2$, the \emph{cartesian product} of $\Gamma_1$ with $\Gamma_2$ is denoted by $\Gamma_1 \square \Gamma_2$. It is the graph whose vertices are the elements of $V(\Gamma_1) \times V(\Gamma_2)$, with $(u_1,v_1)$ adjacent to $(u_2,v_2)$ if and only if either $u_1=u_2$ and $v_1$ is adjacent to $v_2$ in $\Gamma_2$, or $v_1=v_2$ and $u_1$ is adjacent to $u_2$ in $\Gamma_1$.  We say that a graph $\Gamma$ on more than one vertex is \emph{prime} with respect to the cartesian product if $\Gamma\cong\Gamma_1 \square\Gamma_2$ implies that for some $i \in \{1,2\}$, $\Gamma_i \cong \Gamma$ and $\Gamma_{2-i}$ has just one vertex. It is well-known that every graph has a unique \emph{prime factorisation} as the cartesian product of prime graphs. We say that two graphs are \emph{relatively prime} with respect to the cartesian product if they have no common factors in their prime factorisations. We sometimes simply refer to the graphs as prime or relatively prime.
 
\begin{table}[!ht]
\begin{center}
\begin{tabular}{|c|c|c|}\hline
Group & Cayley index & See \\ \hline\hline
Abelian groups &&\\ \hline
$\ZZ_2^n, n \ge 5$ & 1 & \cite{Imrich2}, 1.2 of \cite{ImrichW} \\
$\ZZ_2^3, \ZZ_4 \times \ZZ_2$ & 6 & Lemma 2.7 of \cite{ImrichW}\\
$\ZZ_2^4$& 8 & \Cref{table-abelian}\\
$\ZZ_4^2$ & 4 & \Cref{table-abelian}\\
$\ZZ_4 \times \ZZ_2^2$ & 8 & \Cref{table-abelian}\\
$\ZZ_3^2$& 8 & Lemma 2.4 of \cite{ImrichW}\\
$\ZZ_3^3$& 12 & \Cref{table-abelian}\\
all other abelian groups & 2 & Theorem 1 of \cite{ImrichW} \\ \hline
Hamiltonian $2$-groups & & \\ \hline
$Q_8$ & 16 & Lemma 2.6 of \cite{ImrichW} \\
$Q_8 \times \ZZ_2$ & 16 & \Cref{sec:Q}\\
$Q_8 \times \ZZ_2^n, n \ge 2$ & 8 & \Cref{QxZ_2^n}\\ \hline
Other generalised dicyclic groups && \\ \hline
$Dic(\ZZ_6,3,x)$ & 4 & \Cref{table-dicyclic}\\
$Dic(\ZZ_8,4,x)$ & 4 & \Cref{table-dicyclic}\\
$Dic(\ZZ_4 \times \ZZ_2,(0,1),x)$ & 4 & \Cref{table-dicyclic}\\
all other generalised dicyclic groups & 2 & \Cref{sec:dicyclic}, and Theorem 2 of \cite{ImrichW}\\ \hline
Exceptional groups &&\\ \hline
$D_6$, $D_8$, $D_{10}$ & 2 & Section 2 of \cite{ImrichW}, or \Cref{table-exceptional} \\
$A_4$ & 2 & \Cref{table-exceptional} \\
$Q_8 \times \ZZ_3$, $Q_8 \times \ZZ_4$ & 2 & \Cref{table-exceptional} \\
$H_1$ of order 16 &2& \Cref{table-exceptional} \\
$H_2$ of order 16 &2&\Cref{table-exceptional} \\
$H_3$ of order 18 &2&\Cref{table-exceptional} \\
$H_4$ of order 27 &2&\Cref{table-exceptional} \\ \hline \hline
Every group not listed above & 1 & \cite{Godsil} \\ \hline

\end{tabular}\caption{Cayley indices for all finite groups}
\label{table}
\end{center}
\end{table}

\section{Exceptional groups}\label{sec:exceptional}

We begin by listing the 13 groups that do not admit a GRR but do not lie in either of the infinite families that do not admit GRRs.

\begin{thm}[see \cite{Godsil}]\label{exceptional}
The following are the only groups that are neither generalised dicyclic nor abelian of exponent greater than $2$, yet admit no GRR:
\begin{itemize}
\item $\ZZ_2^2$, $\ZZ_2^3$, $\ZZ_2^4$;
\item $D_6$, $D_8$, $D_{10}$ where these represent the dihedral group of orders $6$, $8$, and $10$ (respectively);
\item $A_4$, the alternating group of degree $4$;
\item $H_1:=\langle a,b,c: a^2=b^2=c^2=1, abc=bca=cab\rangle$;
\item $H_2:=\langle a,b: a^8=b^2=1, bab=a^5\rangle$;
\item $H_3:=\langle a,b,c: a^3=b^3=c^2=1,ab=ba,(ac)^2=(bc)^2=e\rangle$; 
\item $H_4:=\langle a,b,c: a^3=c^3=1,ac=ca,bc=cb,b^{-1}ab=ac\rangle$;
\item $Q_8 \times \ZZ_3$, $Q_8 \times \ZZ_4$, where $Q_8$ is the quaternion group of order $8$. 
\end{itemize}
\end{thm}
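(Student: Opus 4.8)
The plan is to establish the two implications concealed in the word ``only''. Throughout, for a Cayley graph $\Gamma=\Cay(G,S)$ I write $A=\Aut(\Gamma)$ and use that $A=L_G A_1$, where $A_1$ is the stabiliser of the identity vertex; hence $c(\Gamma)=|A_1|$, and $\Gamma$ is a GRR exactly when $A_1=1$. The structural fact I will lean on repeatedly is that an element of $A_1$ normalises $L_G$ precisely when it is induced by an automorphism of $G$ that fixes $S$ setwise, so that $\norm{A}{L_G}=L_G\cdot\Aut(G)_S$, where $\Aut(G)_S$ denotes the setwise stabiliser of $S$ in $\Aut(G)$. The two infinite families excluded from the statement---generalised dicyclic groups and abelian groups of exponent greater than $2$---are disposed of by Watkins' observation that each carries a fixed non-trivial automorphism stabilising every symmetric connection set, so I set these aside and treat only the remaining groups.

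For the first implication I would show that each of the thirteen listed groups genuinely admits no GRR. Every one has order at most $32$, so for a fixed such $G$ the family of admissible connection sets $S$ (symmetric, avoiding the identity) is finite, and the task reduces to checking that each $S$ yields a graph with $A_1\ne 1$. For the elementary abelian groups $\ZZ_2^2,\ZZ_2^3,\ZZ_2^4$ the obstruction is essentially one of scale: every non-identity element is an involution, so $S$ is an arbitrary subset, and for $n\le 4$ the symmetries of the resulting graph---those coming from $\mathrm{GL}(n,2)=\Aut(\ZZ_2^n)$ together with the further symmetries that such graphs carry---always force $A_1\ne 1$, in contrast to the situation for $n\ge 5$, where GRRs exist. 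For the remaining groups $D_6,D_8,D_{10}$, $A_4$, $H_1,\dots,H_4$, $Q_8\times\ZZ_3$ and $Q_8\times\ZZ_4$ I would carry out the corresponding finite verification, exhibiting for each $S$ a non-trivial automorphism fixing the identity---which in many cases can be taken to be induced by a structural automorphism of $G$---and otherwise confirming the claim by a finite computer search over all connection sets.

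For the second implication I argue the contrapositive: every group $G$ outside the two families and outside the list admits a GRR. The strategy is to produce $S$ satisfying \textbf{(a)} no non-trivial automorphism of $G$ fixes $S$ setwise, whence $\norm{A}{L_G}=L_G$; together with \textbf{(b)} $L_G\trianglelefteq A$. These force $A=\norm{A}{L_G}=L_G$. Condition (a) holds for all sufficiently large $G$ by the counting arguments standard in this subject: each non-trivial automorphism fixes only a vanishing proportion of the symmetric subsets of $G\setminus\{1\}$, so once $|G|$ is large the subsets with trivial setwise stabiliser predominate, and the finitely many small groups are handled directly.

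The \textbf{main obstacle} is condition (b), the normality of the regular representation inside the full automorphism group. The difficulty is intrinsic: the ``extra'' automorphisms that would violate normality are invisible to $\Aut(G)$, so no purely group-theoretic datum about $G$ can settle the question and genuine permutation-group input is unavoidable. I would split the analysis according to solvability. For non-solvable $G$ I appeal to Godsil's theorem that every non-solvable group admits a GRR, the proof of which controls a hypothetical overgroup of $L_G$ through its minimal normal subgroups and socle. For solvable $G$ I follow Hetzel's inductive dissection, peeling off normal subgroups and exploiting Sylow structure, an analysis that terminates in exactly the thirteen residual groups of the statement once the two infinite families have been removed. Assembling the two implications yields the classification.
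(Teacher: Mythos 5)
The paper does not actually prove this theorem: it is stated with the attribution ``see \cite{Godsil}'' and, as the introduction recounts, it is precisely the classification assembled by Watkins (identifying the two infinite families), Hetzel \cite{Hetzel} (the thirteen solvable exceptions), and Godsil \cite{Godsil} (every non-solvable group admits a GRR). Your proposal, once its scaffolding of normalizer conditions and counting arguments is stripped away, rests on exactly those same two appeals --- Godsil for non-solvable groups and Hetzel's inductive analysis ``terminating in exactly the thirteen residual groups,'' which is the statement being proved --- so it amounts to the same citation-based treatment as the paper rather than an independent proof, and should be presented as such.
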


The groups listed in the first bullet are abelian, and their Cayley indices are given in~\Cref{sec:abelian}.

All of the remaining groups have Cayley index $2$. Their Cayley index must be at least $2$ by~\Cref{exceptional}, since they admit no GRR. This was shown explicitly in \cite[Theorem 2]{Watkins} for the dihedral groups in the second bullet. It was shown in \cite[Proposition 3.7]{Wat-alt} for $A_4$. For the groups $H_1$ and $H_3$, it was shown in \cite[Proposition 5.3 and Theorem 2]{Wat-alt}. The group $H_2$ was dealt with in~\cite[Theorem 2 or Proposition 3.1]{Now-Wat1}, and $H_4$ in  \cite[Theorem 3]{Now-Wat2}. Finally, $Q_8 \times \ZZ_3$ and $Q_8 \times \ZZ_4$ were addressed in \cite[Theorem]{Wat-Q}.

To show that the Cayley index of each is precisely $2$, we present~\Cref{table-exceptional}. For each group, we give the connection set for a Cayley graph on that group that has Cayley index $2$. The Cayley indices of these graphs can be verified by hand or by computer.

\begin{table}[!ht]
\begin{center}
\begin{tabular}{|c|c|}\hline
Group $G$ & $S$ such that $c(\Cay(G,S))=2$ \\ \hline
$D_{2n}=\langle a,b:a^2=b^n=1, aba=b^{-1}\rangle, n \in \{3,4,5\}$ & $\{a,ab\}$  \\
$A_4$ & $\{(1~2~3)^{\pm1},(1~2)(3~4)\}$\\
 $H_1=\langle a,b,c: a^2=b^2=c^2=1, abc=bca=cab\rangle$& $\{a,b,c,(ab)^{\pm1}\}$\\
$H_2=\langle a,b: a^8=b^2=1, bab=a^5\rangle$ & $\{a^{\pm1},a^{\pm2},b\}$\\
$H_3=\langle a,b,c: a^3=b^3=c^2=1,ab=ba,(ac)^2=(bc)^2=e\rangle$ & $\{a^{\pm1},c,ac,bc\}$\\
$H_4=\langle a,b,c: a^3=c^3=1,ac=ca,bc=cb,b^{-1}ab=ac\rangle$& $\{a^{\pm1},b^{\pm1},(a^{-1}b)^{\pm 1},(bab^{-1})^{\pm1}\}$\\
$Q_8\times\ZZ_3=\langle i,j,z:z^3=1,iz=zi,jz=zj\rangle$& $\{\pm i,(iz)^{\pm 1},(jz)^{\pm 1}\}$ \\
$Q_8 \times \ZZ_4=\langle  i,j,z:z^4=1,iz=zi,jz=zj\rangle$ & $\{z^{\pm 1}, \pm i, \pm j, (iz)^{\pm 1},(-kz)^{\pm 1}\}$\\
\hline

\end{tabular}\caption{MRRs for exceptional groups}
\label{table-exceptional}
\end{center}
\end{table}

The MRRs listed in the first line of this table were also mentioned in \cite{ImrichW}.

\section{Abelian groups}\label{sec:abelian}

The Cayley index of every abelian group was determined in \cite{ImrichW}. However, for a small number of these they stated only that the Cayley index had been found by Hetzel on computer, and cite a private communication. The known results on abelian groups are as follows.

\begin{thm}[Theorem 1, Lemma 2.4, Lemma 2.7 \cite{ImrichW}]
The only finite abelian groups with a Cayley index greater than $2$ are:
\begin{itemize}
\item $\ZZ_2^3$ and $\ZZ_4 \times \ZZ_2$, for which the Cayley index is $6$, with MRR $K_2 \square K_2 \square K_2$ (the cube);
\item $\ZZ_3^2$, for which the Cayley index is $8$, with MRR $K_3 \square K_3$;
\item $\ZZ_2^4$, $\ZZ_4 \times \ZZ_2^2$, and $\ZZ_4^2$; and
\item $\ZZ_3^3$.
\end{itemize}
\end{thm}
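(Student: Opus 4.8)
The plan is to split the statement into a lower bound that holds for all the relevant groups, a generic upper bound of $2$ for everything outside the exceptional list, and a direct treatment of the finitely many exceptions.

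First I would show that every abelian group under consideration has Cayley index at least $2$. When $G$ is not an elementary abelian $2$-group it has an element of order greater than $2$, so the inversion map $\iota\colon g\mapsto g^{-1}$ is a nontrivial automorphism of $G$; since every connection set satisfies $S=S^{-1}$, the map $\iota$ is a graph automorphism of $\Cay(G,S)$ fixing the identity, and as $L_G$ acts regularly this gives $c(\Cay(G,S))\ge 2$. The exponent-$2$ cases are not covered by this argument, but $\ZZ_2^2,\ZZ_2^3,\ZZ_2^4$ appear in \Cref{exceptional} and hence admit no GRR, so $c\ge 2$ there as well, while $\ZZ_2^n$ for $n\ge 5$ has a GRR and $\ZZ_2$ is trivially index $1$.

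The heart of the argument is the upper bound: for every abelian $G$ outside the listed exceptions I would exhibit a connection set $S$ with $\Aut(\Cay(G,S))=L_G\rtimes\langle\iota\rangle$, a group of order $2|G|$, so that $c(G)=2$. The cyclic case is immediate, since the cycle $C_n=\Cay(\ZZ_n,\{\pm 1\})$ already has automorphism group $D_{2n}=L_{\ZZ_n}\rtimes\langle\iota\rangle$. For a general $G=\ZZ_{n_1}\times\cdots\times\ZZ_{n_r}$ the naive cartesian product of cycles is fatal: by the Sabidussi--Vizing theorem its automorphism group is the product of the factors' automorphism groups (with swaps of isomorphic factors), forcing index $2^r$ rather than $2$. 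Instead I would build a single connected Cayley graph that is prime with respect to the cartesian product, by combining a spanning ``backbone'' with asymmetric ``chords'' that mix the coordinates, chosen so that the only graph automorphisms fixing the identity are $\iota$ and the identity. Proving rigidity is the main obstacle: I would argue that any identity-fixing automorphism must preserve the backbone and the chord pattern, hence lies in $\Aut(G,S)=\langle\iota\rangle$. The fact that the small groups lack the room to place such asymmetric chords is precisely why they form the exceptional list.

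Finally I would dispatch the seven exceptional groups individually. For the upper bounds I exhibit the stated MRRs — the $3$-cube $K_2\square K_2\square K_2$ for $\ZZ_2^3$ and $\ZZ_4\times\ZZ_2$, the rook's graph $K_3\square K_3$ for $\ZZ_3^2$ (whose automorphism group $S_3\wr\ZZ_2$ has order $72$, giving index $8$), and explicit graphs realising indices $8,8,4,12$ for $\ZZ_2^4,\ZZ_4\times\ZZ_2^2,\ZZ_4^2,\ZZ_3^3$ respectively — and compute their automorphism groups. For the matching lower bounds I would, for each such group, show that no Cayley graph can do better: because these groups are small, every inversion-symmetric connection set carries additional symmetry, which can be checked either by a short structural case analysis on the possible connection sets or, as the authors note, by an exhaustive computer search over all $S=S^{-1}$. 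I expect $\ZZ_2^4$ and $\ZZ_3^3$ to be the most delicate by hand, since there the number of candidate connection sets is large and the verification is most naturally carried out by computer.
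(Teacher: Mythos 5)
Your outline correctly isolates the three components of the statement (a lower bound of $2$ via inversion, a generic upper bound of $2$, and the finitely many exceptions), and the first and third components are sound: inversion is indeed a nontrivial identity-fixing automorphism of every Cayley graph on an abelian group of exponent greater than $2$ (with the elementary abelian $2$-groups handled through \Cref{exceptional}), and the exact indices of the seven exceptional groups genuinely do come down to exhibiting the stated graphs and doing an exhaustive --- in practice computer --- check for the matching lower bounds, which is exactly what this paper does for $\ZZ_2^4$, $\ZZ_4\times\ZZ_2^2$, $\ZZ_4^2$ and $\ZZ_3^3$ in \Cref{table-abelian}. But there is a genuine gap at what you yourself call the heart of the argument: the claim that every abelian group outside the list admits a connection set $S$ with $\Aut(\Cay(G,S))=L_G\rtimes\langle\iota\rangle$. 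Your ``backbone plus asymmetric chords'' paragraph never specifies the construction, and the rigidity step is stated circularly: ``any identity-fixing automorphism must preserve the backbone and the chord pattern, hence lies in $\langle\iota\rangle$'' is precisely the assertion to be proved, not an argument for it. Likewise, ``the small groups lack the room to place such asymmetric chords'' does not establish that the exceptional list is exactly the seven groups named; identifying which groups fail requires a detailed case analysis. This missing step is not a routine verification --- it is the content of Theorem 1 (together with Lemmas 2.4 and 2.7) of Imrich--Watkins \cite{ImrichW}, a substantial multi-case construction.

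You should also note that your proposal is structured quite differently from the paper itself: the paper does not prove this theorem at all, but quotes it from \cite{ImrichW}; its own contribution to the abelian case is solely to supply the precise Cayley indices and explicit MRRs for the four groups in the last two bullets (\Cref{table-abelian}), with correctness verified by computer. So a complete write-up along your lines would amount to reproving the Imrich--Watkins theorem from scratch, which is legitimate but far beyond what the paper attempts; as it stands, your proposal asserts rather than proves that central result.
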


In the rest of this section, we list the Cayley index for each of the last four groups together with an MRR for each group. The Cayley indices for these graphs and the fact that these are the Cayley indices for these groups can be verified by computer.

If $A$ is an abelian group that we are presenting as being isomorphic to $\ZZ_{i_1}\times \ldots \times \ZZ_{i_k}$, then we let $\{z_1, \ldots, z_k\}$ be the canonical generating set for this group, so $|z_j|=i_j$. We present the Cayley index and an MRR for each group in~\Cref{table-abelian}.

\begin{table}[!ht]
\begin{center}
\begin{tabular}{|c|c|c|}\hline
Group & Cayley index & Connection set for an MRR \\ \hline
$\ZZ_2^4$ &8& $\{z_1,z_2,z_3,z_4,z_1z_2,z_1z_3,z_2z_4\}$  \\
$\ZZ_4 \times \ZZ_2^2$ & 8&$\{z_1^{\pm 1}, z_2,z_3,(z_1z_2)^{\pm 1},(z_1z_3)^{\pm1}\}$\\
 $\ZZ_4^2$&4& $\{z_1^{\pm1}, z_2^{\pm1},z_1^2,(z_1z_2)^{\pm1}\}$\\
$\ZZ_3^3$ & 12&$\{z_1^{\pm1}, z_2^{\pm1}, z_3^{\pm1},(z_1z_2)^{\pm1},(z_1z_3)^{\pm1},(z_2z_3)^{\pm1}\}$\\
\hline

\end{tabular}\caption{MRRs for abelian groups not given in~\cite{ImrichW}}
\label{table-abelian}
\end{center}
\end{table}

It may seem odd that $c(\ZZ_2^4)>c(\ZZ_2^3)$. However,~\Cref{multiply} does not apply here, because neither MRR for $\ZZ_2^3$ ($K_2 \square K_2 \square K_2$ and its complement, $K_4 \square K_2$), is relatively prime to $K_2$, which is the unique connected MRR for $\ZZ_2$.

\section{The groups $Q_8 \times \ZZ_2^n$}\label{sec:Q}

In this section we deal with a particular family of generalised dicyclic groups: groups of the form $Q_8 \times \ZZ_2^n$ for some nonnegative integer $n$. These groups are also known as hamiltonian $2$-groups.

We begin with three important results from \cite{ImrichW}.

\begin{lem}[Lemma 2.6, \cite{ImrichW}]\label{Q8}
The group $Q_8$ has Cayley index $16$, with $C_4 \square \overline{K_2}$ as an MRR.
\end{lem}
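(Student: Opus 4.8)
The plan is to establish the claim in two halves: first that $c(Q_8) \le 16$ by exhibiting a specific Cayley graph whose automorphism group has the right size, and second that $c(Q_8) \ge 16$, i.e.\ that no Cayley graph on $Q_8$ can do better. For the upper bound I would take $S$ to be the connection set realizing $C_4 \square \overline{K_2}$ as a Cayley graph on $Q_8$. Since $|Q_8| = 8$, a Cayley index of $16$ means $|\Aut(\Gamma)| = 128$. The graph $C_4 \square \overline{K_2}$ has $8$ vertices, and its automorphism group is computable directly: $\Aut(C_4) = D_8$ has order $8$, $\Aut(\overline{K_2}) = \sym{2}$ has order $2$, and since $C_4$ and $\overline{K_2}$ are relatively prime with respect to the cartesian product (one is connected, the other is not, so they share no prime factors), the automorphism group of the product is the direct product $\Aut(C_4) \times \Aut(\overline{K_2})$ by the standard theorem on automorphisms of cartesian products. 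This gives $|\Aut(\Gamma)| = 8 \times 2 \times \ldots$ — here I would be careful, since $\overline{K_2}$ on two vertices contributes a factor, and I would verify that the total is exactly $128$, confirming $c(\Gamma) = 128/8 = 16$. I would double check which connection set $S \subseteq Q_8$ produces this graph, noting that the two cosets of the subgroup generating the $\overline{K_2}$ factor must correspond to a central involution.

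For the lower bound, which is the substantive direction, I would argue that every Cayley graph on $Q_8$ has automorphism group of order at least $128$. The natural approach is exhaustive: up to taking complements and the action of $\Aut(Q_8)$ on connection sets, there are only finitely many Cayley graphs on $Q_8$, and one can classify them. The key structural fact is that $Q_8$ has a unique involution $-1$, which lies in the center and in every nontrivial subgroup; this forces strong regularity on the connection sets and means many candidate graphs collapse to a small list. I would enumerate the possible symmetric connection sets $S = S^{-1}$ with $1 \notin S$ up to the automorphism group $\Aut(Q_8) \cong \sym{4}$, and for each resulting graph compute $|\Aut(\Gamma)|$, checking that it is always at least $128$. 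Because every subset of $\{\pm i, \pm j, \pm k\}$ that is closed under inversion is automatically closed under the whole action in a rigid way, the pairs $\{i, -i\}$, $\{j, -j\}$, $\{k, -k\}$, and the singleton $\{-1\}$ behave as indivisible blocks, so the enumeration is quite short.

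The main obstacle is the lower bound: one must rule out the existence of any rigid-enough Cayley graph, and the cleanest way is a careful case analysis exploiting the exceptional structure of $Q_8$. The essential point is that the center $\zent{Q_8} = \{1, -1\}$ together with the three subgroups $\langle i \rangle, \langle j \rangle, \langle k \rangle$ forces any connection set to respect a great deal of symmetry, so no choice of $S$ can break enough of it. I would verify this by observing that left multiplication and the inner automorphisms, together with the fixed-point structure around the unique involution, always embed a group of order at least $128$ in $\Aut(\Gamma)$. Since the result is stated as already established in \cite{ImrichW}, I would present the upper-bound construction in full and then note that the computation $|\Aut(C_4 \square \overline{K_2})| = 128$ settles the value, citing the source for the matching lower bound rather than reproving the entire case analysis; the verification that $c(\Gamma) = 16$ for this specific graph can in any case be confirmed by computer.
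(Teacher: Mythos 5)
The paper never proves this lemma at all: it is imported verbatim from \cite{ImrichW} (their Lemma 2.6), so your proposal has to stand on its own, and the one step you carry out in detail contains a genuine error. The theorem you invoke --- that relatively prime factors give $\Aut(\Gamma_1 \square \Gamma_2) \cong \Aut(\Gamma_1) \times \Aut(\Gamma_2)$ --- holds only for \emph{connected} factors (the hypothesis appears explicitly in \Cref{multiply} of this paper), and $\overline{K_2}$ is disconnected. If that formula did apply here it would give $|\Aut(C_4 \square \overline{K_2})| = 8 \cdot 2 = 16$, hence Cayley index $2$, flatly contradicting the statement being proved; you sense the tension when you insist the total "should be $128$," but the method you chose can only ever produce $16$. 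The correct computation starts from the observation that, since $\overline{K_2}$ has no edges, $C_4 \square \overline{K_2}$ is simply the disjoint union of two copies of $C_4$; it is realised as $\Cay(Q_8, \{\pm i\})$, the two $4$-cycles being the cosets of $\langle i \rangle$. The automorphism group of a disjoint union of two isomorphic connected graphs is a wreath product, here $\Aut(C_4) \wr \sym{2}$, of order $8^2 \cdot 2 = 128$, which gives Cayley index $128/8 = 16$.

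For the lower bound, your two sketches are of unequal worth. The assertion that inner automorphisms together with "fixed-point structure around the unique involution" always embed a group of order at least $128$ in $\Aut(\Gamma)$ is not an argument; note that the paper's own general machinery (\Cref{prop:lower-bound} combined with the proof of \Cref{8-bound}) yields only Cayley index at least $8$ for $Q_8$, so something beyond such soft considerations is genuinely required to reach $16$. Your enumeration sketch, on the other hand, does work and is very short once set up correctly: any inverse-closed $S \subseteq Q_8 \setminus \{1\}$ is a union of the blocks $\{-1\}, \{\pm i\}, \{\pm j\}, \{\pm k\}$; since $\Aut(Q_8) \cong \sym{4}$ permutes the three pairs transitively and a graph and its complement have the same automorphism group, only four graphs need checking: the empty graph, the perfect matching $4K_2$, the graph $2C_4$, and $2K_4$, whose automorphism groups have orders $40320$, $384$, $128$, and $1152$ respectively. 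The minimum over all eight connection sets is therefore $128$, attained exactly by $2C_4 \cong C_4 \square \overline{K_2}$ and its complement, which simultaneously establishes $c(Q_8) = 16$ and that this graph is an MRR. If you carry out that enumeration explicitly you have a complete self-contained proof; failing that, deferring the lower bound to \cite{ImrichW} is legitimate and is in fact all that the paper itself does.
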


\begin{lem}[Proposition 2.9, \cite{ImrichW}]\label{primeMRR}
Every group other than $\ZZ_2^2$, $\ZZ_2^3$, $\ZZ_4$, $\ZZ_4\times \ZZ_2$, and $\ZZ_3^2$ admits a connected MRR that is prime with respect to the cartesian product.
\end{lem}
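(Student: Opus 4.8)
The plan is to take a known MRR for each group---whose existence and Cayley index are supplied by the GRR classification and by the abelian and dicyclic results summarised above---and to upgrade it to one that is simultaneously connected and prime, without changing its Cayley index. Connectedness comes essentially for free: a graph and its complement have the same automorphism group (so $\Cay(G,S)$ and its complement have equal Cayley index), while a graph and its complement are never both disconnected; hence of the two we may always select a connected MRR. The substantive issue is primeness, and the two tools I would use are the Sabidussi--Vizing description of $\Aut(\Gamma_1\square\cdots\square\Gamma_k)$ for a connected graph (in terms of the automorphisms of the prime factors together with permutations of isomorphic factors), and the fact that the three edges of any triangle are Djokovi\'c--Winkler related and so lie in a single Cartesian factor.

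For a group admitting a GRR I would first settle the directly indecomposable case. Every Cayley graph is vertex-transitive, and each Cartesian factor of a connected vertex-transitive graph is again vertex-transitive, so each prime factor $\Gamma_i$ satisfies $|\Aut(\Gamma_i)|\ge|V(\Gamma_i)|$. If $\Gamma$ is a GRR then $|\Aut(\Gamma)|=|G|=\prod_i|V(\Gamma_i)|$; since isomorphic factors would contribute a further factorial, the factors must be pairwise non-isomorphic, and then equality forces each $|\Aut(\Gamma_i)|=|V(\Gamma_i)|$, so each factor is itself a GRR. This in turn forces a matching direct decomposition $G\cong\prod_i G_i$. Consequently, if $G$ is directly indecomposable then every connected GRR of $G$ is already prime, and we are done.

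For the remaining groups---decomposable GRR-groups, abelian groups of exponent greater than $2$, generalised dicyclic groups, and the finite list of \Cref{exceptional}---I would certify primeness directly by arranging the connection set to be \emph{triangle-connected}: if $\Gamma$ is connected and its edges are linked through chains of shared triangles, then all edges fall into one Cartesian factor and $\Gamma$ is prime. A triangle in $\Cay(G,S)$ arises from $s,t\in S$ with $s^{-1}t\in S$, so one seeks generating connection sets rich in such triples while keeping the automorphism group as small as the minimal Cayley index allows. For the abelian and generalised dicyclic families the minimal Cayley index is $2$, realised by a canonical extra automorphism fixing the identity (inversion in the abelian case), and the explicit sets in \Cref{table-abelian} and \Cref{table-exceptional} are triangle-connected; for instance $\{z_1^{\pm1},z_2^{\pm1},z_1^2,(z_1z_2)^{\pm1}\}$ for $\ZZ_4^2$ contains the triangles on $\{1,z_1,z_1^2\}$ and $\{1,z_1,z_1z_2\}$, and these link up all of its edges.

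The main obstacle is the tension between injecting enough triangles to force primeness and keeping the automorphism group down to the minimal Cayley index; for the infinite families this must be carried out uniformly rather than case by case, which is where the real work lies. This tension is exactly what cannot be resolved for the five excluded groups: at their minimal Cayley index the only connected candidates are the triangle-free graphs $C_4=K_2\square K_2$ (for $\ZZ_4$ and $\ZZ_2^2$) and the cube $K_2\square K_2\square K_2$ (for $\ZZ_2^3$ and $\ZZ_4\times\ZZ_2$), together with $K_3\square K_3$ (for $\ZZ_3^2$), whose triangles are trapped inside the $K_3$-layers and fail to connect. A final subtlety is the self-complementary-product behaviour of $K_3\square K_3$, where a graph and its complement are both non-trivial products; one must rule out, for every non-excluded group, that the complement step or the primeness-forcing choice above merely trades one product for another, and this is where the rigidity afforded by the GRR and inversion structure of the factors is used.
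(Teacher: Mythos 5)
The first thing to note is that the paper does not prove this statement at all: it is imported verbatim as Proposition~2.9 of \cite{ImrichW}, so there is no in-paper argument to compare yours against, and the review below treats your proposal on its own terms.

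Your proposal contains correct and useful pieces: complementation to obtain connectedness for free, and the Sabidussi-based argument that a connected GRR of a \emph{directly indecomposable} group is automatically prime (isomorphic factors would force non-regular automorphisms, and pairwise non-isomorphic factors force each factor to be a GRR of a direct factor of $G$) --- that much is a genuine proof. But for every other case --- groups admitting a GRR that are nontrivial direct products, the infinite family of abelian groups of Cayley index $2$, and all generalised dicyclic groups --- what you offer is a strategy rather than a proof: ``seek connection sets rich in triangles while keeping the automorphism group down,'' verified on a single example from \Cref{table-abelian}. You then concede that carrying this out uniformly for the infinite families ``is where the real work lies.'' That work is precisely the content of the lemma, and it is left undone: enlarging a connection set to force triangle-connectedness tends to create new automorphisms, and you give no argument that this tension can always be resolved, nor any uniform construction for even one of the infinite families. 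Two smaller points: the statement makes no claim about the five excluded groups, so your final paragraph is not needed to prove it; and within that paragraph the assertion that the cube is the \emph{only} connected MRR for $\ZZ_2^3$ and $\ZZ_4\times\ZZ_2$ at index $6$ is false --- its complement $K_4\square K_2$ is a second connected (and also non-prime) MRR, as the paper itself observes at the end of \Cref{sec:abelian}.
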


\begin{lem}[Lemma 2.8, \cite{ImrichW}]\label{multiply}
Let $G_1$ and $G_2$ be groups having connected MRRs that are relatively prime with respect to the cartesian product. Then $c(G_1 \times G_2) \le c(G_1)c(G_2)$.

In fact, if $\Gamma_1$ and $\Gamma_2$ are connected MRRs for $G_1$ and $G_2$ (respectively) that are relatively prime with respect to the cartesian product, then $c(\Gamma_1 \square\Gamma_2)=c(G_1)c(G_2)$ and $\Gamma_1\square\Gamma_2$ is a Cayley graph on $G_1 \times G_2$.
\end{lem}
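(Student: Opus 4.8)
The plan is to prove the sharper second assertion, namely the equality $c(\Gamma_1 \square \Gamma_2) = c(G_1)c(G_2)$; the inequality $c(G_1 \times G_2) \le c(G_1)c(G_2)$ then follows at once, since $\Gamma_1 \square \Gamma_2$ is just one particular Cayley graph on $G_1 \times G_2$ while $c(G_1 \times G_2)$ is the minimum Cayley index taken over all of them.

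First I would verify that $\Gamma_1 \square \Gamma_2$ really is a Cayley graph on $G_1 \times G_2$. Writing $\Gamma_i = \Cay(G_i, S_i)$, a direct check against the definition of the cartesian product shows that $\Gamma_1 \square \Gamma_2 = \Cay(G_1 \times G_2, S)$ with $S = (S_1 \times \{1\}) \cup (\{1\} \times S_2)$; this $S$ is symmetric and avoids the identity, so it is a legitimate connection set. Under this identification the left-regular representation $L_{G_1 \times G_2}$ acts componentwise and coincides with $L_{G_1} \times L_{G_2}$, sitting inside $\Aut(\Gamma_1) \times \Aut(\Gamma_2) \le \Aut(\Gamma_1 \square \Gamma_2)$.

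The heart of the argument is to pin down $\Aut(\Gamma_1 \square \Gamma_2)$, and here I would invoke the Sabidussi--Vizing theorem on automorphisms of connected cartesian products: a connected graph factors uniquely into primes, and its automorphism group is generated by the automorphisms of the prime factors together with the permutations of mutually isomorphic factors. Because $\Gamma_1$ and $\Gamma_2$ are connected and relatively prime, they share no isomorphic prime factor, so no automorphism can interchange a factor of $\Gamma_1$ with one of $\Gamma_2$; consequently $\Aut(\Gamma_1 \square \Gamma_2) = \Aut(\Gamma_1) \times \Aut(\Gamma_2)$. This is the step I expect to be the main obstacle, since it is precisely where the relative-primality hypothesis is essential and where the full strength of unique prime factorisation for connected graphs is used (connectedness of $\Gamma_1 \square \Gamma_2$ following from that of its factors).

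With this structural fact established, the remainder is a routine order computation. Recalling that $c(\Gamma) = |\Aut(\Gamma) : L_G| = |\Aut(\Gamma)|/|G|$, I would write
\[
c(\Gamma_1 \square \Gamma_2) = \frac{|\Aut(\Gamma_1 \square \Gamma_2)|}{|G_1 \times G_2|} = \frac{|\Aut(\Gamma_1)| \cdot |\Aut(\Gamma_2)|}{|G_1| \cdot |G_2|} = c(\Gamma_1)\, c(\Gamma_2).
\]
Since $\Gamma_1$ and $\Gamma_2$ are MRRs for $G_1$ and $G_2$, we have $c(\Gamma_i) = c(G_i)$, so $c(\Gamma_1 \square \Gamma_2) = c(G_1)c(G_2)$, and the claimed inequality follows.
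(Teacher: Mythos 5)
Your proof is correct, but note that this paper itself offers no proof of the lemma: it is imported verbatim as Lemma 2.8 of Imrich and Watkins \cite{ImrichW}, so there is no internal argument to compare against. The route you take---realising $\Gamma_1 \square \Gamma_2$ as $\Cay(G_1\times G_2, (S_1\times\{1\})\cup(\{1\}\times S_2))$, invoking the Sabidussi--Vizing theorem to get $\Aut(\Gamma_1\square\Gamma_2)=\Aut(\Gamma_1)\times\Aut(\Gamma_2)$ from connectedness and relative primality, and then computing indices---is exactly the standard argument, and is the one used in the original source, so your proposal matches the intended proof.
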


The following observation is made in \cite{ImrichW} and is implicit in their Theorem 2(b), which states that $c(Q_8 \times \ZZ_2^n) \le 16$ for every integer $n \ge 0$. It can be deduced from \Cref{Q8,primeMRR,multiply}, using the fact that $c(\ZZ_2)=1$.

\begin{cor}\label{no-bigger}
For every group $G \notin\{\ZZ_2^2,\ZZ_2^3,\ZZ_4,\ZZ_4\times \ZZ_2,\ZZ_3^2\}$, $c(G \times \ZZ_2)\le c(G)$.
\end{cor}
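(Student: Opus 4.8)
The plan is to realise a good Cayley graph on $G \times \ZZ_2$ as a cartesian product and read off its Cayley index directly from \Cref{multiply}. The two ingredients I would feed into that lemma are a prime connected MRR for $G$, supplied by \Cref{primeMRR}, and a prime connected MRR for $\ZZ_2$. For the latter, I note that $K_2 = \Cay(\ZZ_2,\{1\})$ has automorphism group of order $2$, so it is a GRR and hence a connected MRR for $\ZZ_2$ with $c(\ZZ_2)=1$; moreover $K_2$ is prime with respect to the cartesian product, since it has only two vertices and so admits no nontrivial factorisation.

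First I would apply \Cref{primeMRR}: because $G \notin \{\ZZ_2^2,\ZZ_2^3,\ZZ_4,\ZZ_4\times \ZZ_2,\ZZ_3^2\}$ — which is exactly where the hypothesis is used — the group $G$ admits a connected MRR $\Gamma$ that is prime with respect to the cartesian product, and $c(\Gamma)=c(G)$. Next I would verify that $\Gamma$ and $K_2$ are relatively prime, and then invoke \Cref{multiply} with $G_1=G$, $\Gamma_1=\Gamma$ and $G_2=\ZZ_2$, $\Gamma_2=K_2$. The lemma then tells me that $\Gamma \square K_2$ is a Cayley graph on $G \times \ZZ_2$ and that $c(\Gamma \square K_2)=c(G)\,c(\ZZ_2)=c(G)$. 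Since $c(G \times \ZZ_2)$ is by definition the minimum Cayley index over all Cayley graphs on $G \times \ZZ_2$, exhibiting this single graph already yields $c(G \times \ZZ_2)\le c(G)$, as required.

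The one step that needs care — and the only place the deduction can go wrong — is the relative primality of $\Gamma$ and $K_2$. Since both graphs are prime, each is its own unique prime factor, so they fail to be relatively prime precisely when $\Gamma \cong K_2$. But $\Gamma$ has $|G|$ vertices while $K_2$ has two, so $\Gamma \cong K_2$ can happen only if $|G|=2$. Thus for every $G$ of order at least $3$ that lies outside the excluded list the two factorisations share no common factor and the argument closes immediately through \Cref{multiply}; the degenerate case $|G|=2$ (that is, $G=\ZZ_2$) is the sole point at which the cartesian-product bound is delicate and must be examined on its own. I expect this relative-primality check, together with that edge case, to be the main obstacle, while the remainder is a routine concatenation of \Cref{primeMRR} and \Cref{multiply}.
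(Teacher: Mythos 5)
Your proposal is correct and is essentially the paper's own argument: the paper disposes of this corollary in one line, noting that it can be deduced from \Cref{primeMRR} and \Cref{multiply} together with the fact that $K_2$ is a prime connected MRR for $\ZZ_2$ with $c(\ZZ_2)=1$, which is exactly your construction $\Gamma \square K_2$. One remark on the edge case you flagged: for $G=\ZZ_2$ the relative-primality requirement genuinely cannot be met, and in fact the statement itself fails there, since $c(\ZZ_2\times\ZZ_2)=c(\ZZ_2^2)=2>1=c(\ZZ_2)$ because $\ZZ_2^2$ admits no GRR; so $\ZZ_2$ should properly be added to the excluded list. The paper overlooks this, harmlessly, since the corollary is only ever applied to hamiltonian $2$-groups and generalised dicyclic groups, never to $G=\ZZ_2$, but your instinct that this case ``must be examined on its own'' is exactly right --- it is not provable because it is not true.
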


The following result is key to providing a lower bound for the Cayley index of every group $Q_8 \times \ZZ_2^n$.

\begin{prop}[Classification Theorem, \cite{CompleteCCA}]\label{prop:lower-bound}
There are 8 permutations $\varphi$ of the elements of $G=Q_8 \times \ZZ_2^n$ that fix the identity, and have the property that for every $g, h \in G$, $\varphi(gh)$ is either $\varphi(g)h$, or $\varphi(g)h^{-1}$.
\end{prop}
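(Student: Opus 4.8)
The plan is to exploit the special structure of $G=Q_8\times\ZZ_2^n$, in which every element squares into the centre. First I would put $g=1$ in the defining condition. Since $\varphi(1)=1$, this forces $\varphi(h)\in\{h,h^{-1}\}$ for every $h\in G$: each element is sent either to itself or to its inverse. Writing $z$ for the central involution $(-1,0)$, I observe that every $g\in G$ satisfies $g^2\in\{1,z\}$, with $g^2=1$ exactly when $g$ has order at most $2$ and $g^2=z$ exactly when $g$ has order $4$; consequently $g^{-1}=gz$ when $g$ has order $4$, and $g^{-1}=g$ otherwise. Hence there is a function $\eta:G\to\ZZ_2$ with $\varphi(g)=gz^{\eta(g)}$, where $\eta(g)=0$ is forced whenever $g^2=1$, and $\eta(g)$ is (so far) free when $g$ has order $4$.

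Next I would substitute this parametrisation into the full condition $\varphi(gh)\in\{\varphi(g)h,\varphi(g)h^{-1}\}$. Writing $h^{-1}=hz^{s(h)}$, where $s(h)=1$ if $h$ has order $4$ and $s(h)=0$ otherwise, and using that $z$ is central, the two candidate right-hand values become $gh\,z^{\eta(g)}$ and $gh\,z^{\eta(g)+s(h)}$, while the left-hand side is $gh\,z^{\eta(gh)}$. Cancelling $gh$ collapses the entire condition to the single congruence
\[
\eta(gh)-\eta(g)\in\{0,s(h)\}\pmod 2\qquad\text{for all }g,h\in G.
\]
When $h$ has order $4$ this imposes nothing, so the only genuine constraints arise from $h$ of order at most $2$, where the condition reads $\eta(gh)=\eta(g)$. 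The crucial structural fact is that $T:=\{h\in G:h^2=1\}=\{\pm1\}\times\ZZ_2^n=Z(G)$ is actually a \emph{subgroup} (this is where the quaternionic structure is indispensable). Thus the condition is equivalent to: $\eta$ is constant on each coset of $Z(G)$, together with $\eta\equiv0$ on $Z(G)$ itself.

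Finally I would count. Such an $\eta$ descends to a function on $G/Z(G)$ that vanishes on the identity coset and is otherwise arbitrary. Since $G/Z(G)\cong Q_8/\{\pm1\}\cong\ZZ_2^2$ has exactly $4$ elements, there are $2^{4-1}=8$ admissible functions, hence at most $8$ maps $\varphi$. To conclude equality I must check that each candidate is a genuine permutation satisfying the property: distinct choices of $\eta$ give distinct $\varphi$ (as $g\neq gz$), each $\varphi$ either fixes a coset pointwise or acts on it by multiplication by $z$, an involution of that coset, so $\varphi$ permutes $G$, and the reduction above shows the required congruence holds. This pins the count at exactly $8$.

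The step I expect to be the main obstacle is verifying \emph{sufficiency} rather than necessity: reading off the necessary conditions on $\eta$ is quick, but one must take care to confirm that the order-$4$ cases impose no hidden constraint and that every admissible $\eta$ yields a bona fide bijection. The second delicate point is the identification $T=Z(G)$ as a subgroup — in a general group the elements of order at most $2$ need not form a subgroup, and it is precisely this feature of hamiltonian $2$-groups, together with $|G/Z(G)|=4$, that fixes the answer at $8$.
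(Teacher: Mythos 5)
Your proof is correct, and it is worth noting that it does not merely differ from the paper's argument --- the paper gives no argument at all for this proposition. The statement is imported verbatim as a ``Classification Theorem'' from Byrne, Donner and Sibley \cite{CompleteCCA} and used as a black box in the proof of \Cref{8-bound}. Your argument is thus a short, self-contained replacement for that citation, and every step checks out: putting $g=1$ forces $\varphi(h)\in\{h,h^{-1}\}$; in $G=Q_8\times\ZZ_2^n$ every element squares into $\{1,z\}$ with $z=(-1,0)$, so $\varphi(g)=gz^{\eta(g)}$ for a function $\eta\colon G\to\ZZ_2$ that must vanish on the set $T$ of elements of order at most $2$; centrality of $z$ reduces the defining condition to $\eta(gh)-\eta(g)\in\{0,s(h)\}\pmod 2$, which is vacuous when $h$ has order $4$ and says that $\eta$ is constant on cosets of $T$ otherwise; and the two structural facts special to hamiltonian $2$-groups --- that $T=Z(G)=\{\pm1\}\times\ZZ_2^n$ is a subgroup and that $G/Z(G)\cong\ZZ_2^2$ has four elements --- then give exactly $2^{4-1}=8$ admissible functions $\eta$. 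Your sufficiency check is genuinely needed and is also right: each admissible $\eta$ defines a bijection (on each coset of $Z(G)$ the map is either the identity or multiplication by the central involution $z$), distinct $\eta$ give distinct $\varphi$ since $gz\neq g$, and the same congruence computation run backwards shows each such $\varphi$ satisfies the required property, so the count is exactly $8$. As for what each approach buys: the paper's citation is economical and situates the fact inside a broader classification, while your proof makes the result independent of \cite{CompleteCCA} and has the added virtue of exhibiting the eight maps explicitly (negate an arbitrary subset of the three cosets $iZ(G)$, $jZ(G)$, $kZ(G)$), which makes the subsequent verification in \Cref{8-bound} that each $\varphi$ is an automorphism of every Cayley graph on $G$ completely transparent.
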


\begin{cor}\label{8-bound}
The Cayley index of $Q_8 \times \ZZ_2^n$ is at least $8$ for every integer $n \ge 0$.
\end{cor}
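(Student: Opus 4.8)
The plan is to show that the stabiliser of the identity in the automorphism group of \emph{any} Cayley graph on $G = Q_8 \times \ZZ_2^n$ already contains all $8$ of the permutations produced by \Cref{prop:lower-bound}, so that this stabiliser has order at least $8$.

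First I would record the standard reduction. Let $\Gamma = \Cay(G, S)$ be an arbitrary Cayley graph on $G$ and set $A = \Aut(\Gamma)$. Since $L_G \le A$ acts regularly, the orbit--stabiliser theorem gives $|A| = |G| \cdot |A_1|$, where $A_1$ is the stabiliser of the identity vertex $1$; hence $c(\Gamma) = |A : L_G| = |A_1|$. Thus it suffices to prove $|A_1| \ge 8$ for every admissible $S$.

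Next, I would verify that each of the $8$ permutations $\varphi$ from \Cref{prop:lower-bound} is an automorphism of $\Gamma$. Each such $\varphi$ fixes $1$ and satisfies $\varphi(gh) \in \{\varphi(g)h,\, \varphi(g)h^{-1}\}$ for all $g,h \in G$. Given an edge $\{g, gs\}$ of $\Gamma$ with $s \in S$, I apply this property with $h = s$: then $\varphi(gs)$ equals $\varphi(g)s$ or $\varphi(g)s^{-1}$. Because $S = S^{-1}$, both $s$ and $s^{-1}$ lie in $S$, so in either case $\varphi(gs)$ is adjacent to $\varphi(g)$; that is, $\varphi$ carries every edge to an edge. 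As $\varphi$ is a bijection of the finite vertex set, it induces an injection of the edge set into itself, which is therefore a bijection; hence $\varphi$ preserves both adjacency and non-adjacency and is a genuine automorphism. Since $\varphi(1) = 1$, we obtain $\varphi \in A_1$.

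Finally, the $8$ permutations supplied by \Cref{prop:lower-bound} are distinct and all lie in $A_1$ by the previous step, so $|A_1| \ge 8$ and therefore $c(\Gamma) \ge 8$. As $S$ was arbitrary, $c(G) = \min_{S = S^{-1}} c(\Cay(G,S)) \ge 8$, as claimed. There is no serious obstacle here, since \Cref{prop:lower-bound} does the substantive work of classifying the $8$ permutations; the only point requiring care --- and the place where the hypothesis $S = S^{-1}$ is essential --- is the passage from the ``$h$ or $h^{-1}$'' alternative to genuine adjacency-preservation. The symmetry of the connection set is exactly what collapses this ambiguity, so that the \emph{same} $8$ permutations serve simultaneously as automorphisms of every Cayley graph on $G$, independently of the choice of $S$.
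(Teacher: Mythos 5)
Your proposal is correct and follows essentially the same route as the paper: fix an arbitrary inverse-closed connection set $S$, use the inverse-closure of $S$ to show that each of the $8$ permutations from \Cref{prop:lower-bound} is an automorphism of $\Cay(G,S)$ fixing the identity, and conclude that the stabiliser of the identity --- whose order equals the Cayley index --- has size at least $8$. The only difference is that you spell out explicitly the orbit--stabiliser reduction and the finiteness argument for why edge-preservation yields a genuine automorphism, both of which the paper leaves implicit.
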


\begin{proof}
Fix $n$, and let $G=Q_8 \times \ZZ_2^n$. Let $S$ be any inverse-closed subset of $G$, and let $\Gamma=\Cay(G,S)$. Let $\varphi$ be any of the $8$ permutations given in~\Cref{prop:lower-bound}. To prove this result, it will be sufficient to show that $\varphi$ is an automorphism of $\Gamma$.

We know that for any $g \in G$, $g$ is adjacent to $gs$ if and only if $s \in S$. We also know that $\varphi(gs)$ is either $\varphi(g)s$, or $\varphi(g)s^{-1}$. Since $S$ is inverse-closed, each of these is adjacent to $\varphi(g)$ if and only if $s \in S$. Thus, $\varphi$ is indeed an automorphism of $\Gamma$.
\end{proof}

To complete this section, we note that $\overline{C_4 \square\overline{K_2}}\square K_2$ is an MRR for $Q_8 \times \ZZ_2$ with Cayley index 16, verified by computer. However, for $Q_8 \times \ZZ_2^2$, the Cayley index is $8$, with MRR $\Cay(Q_8 \times \ZZ_2^2,\{\pm i ,\pm j, \pm k, \pm iz_1, \pm kz_1z_2,z_1,z_2\})$, where $z_1$ and $z_2$ are two distinct central involutions that do not lie in $Q_8$.

Thus, using~\Cref{no-bigger} and~\Cref{8-bound} we are able to conclude the following.

\begin{prop}\label{QxZ_2^n}
For every integer $n \ge 2$, the Cayley index of $Q_8 \times \ZZ_2^n$ is $8$.
\end{prop}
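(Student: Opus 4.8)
The statement to prove is \Cref{QxZ_2^n}: for every integer $n \ge 2$, the Cayley index of $Q_8 \times \ZZ_2^n$ is exactly $8$.

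The plan is to establish matching lower and upper bounds. The lower bound is immediate: by \Cref{8-bound}, every group of the form $Q_8 \times \ZZ_2^n$ has Cayley index at least $8$, with no restriction on $n$, so in particular this holds for $n \ge 2$. The entire substance of the argument therefore lies in proving the upper bound $c(Q_8 \times \ZZ_2^n) \le 8$ for all $n \ge 2$.

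For the upper bound I would argue by induction on $n$, using \Cref{no-bigger} as the inductive step. The base case is $n = 2$: here I would take the explicit connection set exhibited in the paragraph immediately preceding the proposition, namely $S = \{\pm i, \pm j, \pm k, \pm iz_1, \pm kz_1z_2, z_1, z_2\}$ on $G = Q_8 \times \ZZ_2^2$, whose associated Cayley graph has been verified (by computer) to have Cayley index $8$. This settles $c(Q_8 \times \ZZ_2^2) = 8$. For the inductive step, suppose $n \ge 2$ and that $c(Q_8 \times \ZZ_2^n) = 8$; writing $G = Q_8 \times \ZZ_2^n$, I would check that $G$ is not one of the five excluded groups $\{\ZZ_2^2, \ZZ_2^3, \ZZ_4, \ZZ_4 \times \ZZ_2, \ZZ_3^2\}$ in \Cref{no-bigger}. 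This is clear since $|G| = 2^{n+3} \ge 32$ for $n \ge 2$, whereas all five excluded groups have order at most $16$; moreover $G$ is nonabelian while all five exceptions are abelian. Hence \Cref{no-bigger} applies and yields $c(Q_8 \times \ZZ_2^{n+1}) = c(G \times \ZZ_2) \le c(G) = 8$.

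Combining the two bounds gives $8 \le c(Q_8 \times \ZZ_2^n) \le 8$, hence equality, for every $n \ge 2$. The only genuinely delicate point is the base case $n = 2$: the upper bound there rests on the explicit construction and its computer verification, and \Cref{no-bigger} cannot be used to reach it from $Q_8 \times \ZZ_2$ because $c(Q_8 \times \ZZ_2) = 16 \ne 8$, so there is a real drop in the Cayley index between $n = 1$ and $n = 2$ that the monotone inequality of \Cref{no-bigger} alone cannot explain. Thus I expect the main obstacle to be justifying that $8$ (rather than something larger) is attainable at $n = 2$; once that single exhibited graph is in hand, \Cref{no-bigger} propagates the upper bound upward for free, and \Cref{8-bound} clamps it from below.
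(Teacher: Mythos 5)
Your proposal is correct and matches the paper's own (implicit) argument exactly: the paper also combines the lower bound from \Cref{8-bound}, the computer-verified connection set $\{\pm i, \pm j, \pm k, \pm iz_1, \pm kz_1z_2, z_1, z_2\}$ for the base case $n=2$, and repeated application of \Cref{no-bigger} to propagate the upper bound to all $n \ge 2$. Your observation that the drop from index $16$ at $n=1$ to index $8$ at $n=2$ is precisely why the explicit base-case construction is indispensable is also the correct reading of why the paper presents that graph.
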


\section{Other Generalised Dicyclic groups}\label{sec:dicyclic}

Imrich and Watkins \cite{ImrichW} showed that generalised dicyclic groups of order greater than 96 that are not of the form $Q_8 \times \ZZ_2^n$ have Cayley index 2. Many of the ideas from their proof  in fact apply to generalised dicyclic groups of smaller orders. We reproduce these key ideas here, without their assumptions on order. We generally need to find two elements that satisfy a number of conditions. We note that the condition $a_1 \neq ya_2$ was not listed in \cite{ImrichW} but is required; for this reason we provide a full proof of~\Cref{x-fixed-or-inverted}.

\begin{defn}
Let $Dic(A,y,x)$ be a generalised dicyclic group. We say that $(a_1, a_2) \in A\times A$ is a \emph{suitable pair} of elements of $Dic(A,y,x)$ if  for every $\{i,j\}=\{1,2\}$ we have
\begin{enumerate}[(i)]
\item $a_1 \neq a_2, ya_2$
\item $a_i^2 \neq 1,y$;
\item $a_i \neq a_j^2,ya_j^2$; and
\item $a_1a_2 \neq 1, y$.
\end{enumerate}
\end{defn}

\begin{lem}\label{A-fixed}
Let $A=\langle z_1\rangle$ be a cyclic group of order $2n \ge 10$, and let $S=\{z_1,z_1^{-1}\}$. 
Then $(z_1,z_1^{-2})$ is a suitable pair for $D=Dic(A,z_1^n,x)$. 

Also, if $\Gamma=\Cay(D,S\cup\{x,x^{-1},xz_1,x^{-1}z_1,xz_1^{-2},x^{-1}z_1^{-2}\})$ and $\varphi \in \Aut(\Gamma)_1$, then $\varphi(A)=A$.
\end{lem}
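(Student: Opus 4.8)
The plan is to first dispose of the suitable-pair claim by a direct check, and then to prove $\varphi(A)=A$ by giving a purely graph-theoretic, $\Aut(\Gamma)_1$-invariant description of the coset $A$. For the suitable pair, everything happens inside the cyclic group $A=\langle z_1\rangle$ of order $2n$ with $y=z_1^n$, so each of (i)--(iv) reduces to checking that a specific power $z_1^m$ is nontrivial. For instance (ii) asks $z_1^2,z_1^{-4}\notin\{1,z_1^n\}$, (iv) asks $z_1^{-1}\notin\{1,z_1^n\}$, and (i), (iii) similarly become noncongruences modulo $2n$; these are routine to verify for the stated range, with a couple of them tight near the lower end and so needing to be checked by hand rather than asymptotically.

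Next I would record the structure of $\Gamma$. The vertex set is $D=A\sqcup Ax$, and since $S=\{z_1^{\pm1}\}\subseteq A$ the induced subgraphs $\Gamma[A]$ and $\Gamma[Ax]$ are each a $2n$-cycle. The remaining six connection-set elements $x^{\pm1},(xz_1)^{\pm1},(xz_1^{-2})^{\pm1}$ all lie in $Ax$, so they contribute only edges between $A$ and $Ax$. Writing each element of $Ax$ as $z_1^bx$ and using $x^{-1}ax=a^{-1}$, one checks that $z_1^a$ is joined to $z_1^bx$ exactly when $b-a$ lies in a fixed six-element offset set $K\subseteq\ZZ_{2n}$. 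Thus $\Gamma$ is two $2n$-cycles joined by a $6$-regular set of cross-edges, and it suffices to show that $\varphi$ preserves the bipartition $\{A,Ax\}$ (then $\varphi(A)=A$ because $\varphi(1)=1\in A$).

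The crux is to recover the central involution $y=z_1^n$ canonically. I would do this by counting, for each edge, the number of triangles on it: the within-coset edges and four of the six cross-directions lie in exactly two triangles, while the two cross-directions coming from $a_2=z_1^{-2}$ (namely $xz_1^{-2}$ and its inverse) lie in none. Hence the set $W_0$ of edges in no triangle is $\Aut(\Gamma)_1$-invariant, and a short computation shows $W_0$ is a disjoint union of $4$-cycles, each of the form $\{z_1^a,\,z_1^{a+2}x,\,z_1^{a+n},\,z_1^{a+n+2}x\}$, whose two diagonals are $\{z_1^a,z_1^{a+n}\}$ and $\{z_1^{a+2}x,z_1^{a+n+2}x\}$. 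Every diagonal pair has the shape $\{h,hy\}$, and since $\varphi$ permutes these $4$-cycles and carries diagonals (non-adjacent pairs within a $4$-cycle) to diagonals, it preserves the pairing $h\mapsto hy$, i.e.\ $\varphi(hy)=\varphi(h)y$ for every vertex $h$; taking the $4$-cycle through $1$ gives in particular $\varphi(y)=y$.

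Finally I would use left multiplications, which are automorphisms of $\Gamma$. A direct check at the identity shows $y$ is adjacent to all six cross-neighbours of $1$ but to neither of $z_1,z_1^{-1}$, so $N(1)\setminus N(y)=\{z_1,z_1^{-1}\}$, exactly the two neighbours of $1$ lying in $A$. Translating by $L_g$ gives $N(g)\setminus N(gy)=\{gz_1,gz_1^{-1}\}$ for every $g$, so the pair of same-coset neighbours of $g$ is described using only adjacency and the map $h\mapsto hy$. As $\varphi$ respects both, it sends $\{gz_1,gz_1^{-1}\}$ to $\{\varphi(g)z_1,\varphi(g)z_1^{-1}\}$ and hence carries within-coset edges to within-coset edges. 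That edge set is precisely $\Gamma[A]\sqcup\Gamma[Ax]$, whose two connected components are $A$ and $Ax$; since $\varphi$ permutes these components and fixes $1\in A$, we get $\varphi(A)=A$. The main obstacle is the bookkeeping behind the two genericity claims used above --- that $W_0$ is exactly the $a_2$-direction cross-edges (forming $4$-cycles) and that $N(1)\setminus N(y)=\{z_1,z_1^{-1}\}$. Both amount to showing that $K$ avoids a small list of forbidden coincidences in $\ZZ_{2n}$, which is the same type of non-collision condition that defines a suitable pair and is where the hypothesis $2n\ge10$ is needed; because several of these inequalities are tight for the smallest admissible $n$, the computation must be carried out explicitly.
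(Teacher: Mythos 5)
Your argument for the second assertion ($\varphi(A)=A$) is correct on the whole stated range, but it is genuinely different from the paper's. The paper pins down the pairing $v\mapsto vz_1^n$ by a common-neighbour count: for $n>4$, $vz_1^n$ is the unique vertex with $6$ common neighbours with $v$; it then observes that the six neighbours of $1$ in $xA$ pair up under this matching inside $N(1)$ while the partners of $z_1^{\pm1}$ are not neighbours of $1$, and it finishes by ``repeating this argument'' vertex by vertex. You recover the same pairing from an edge invariant instead: the edges on no triangle are exactly the cross edges in the two $a_2$-directions, they form $4$-cycles whose diagonals are the pairs $\{h,hz_1^n\}$, and then the identity $N(g)\setminus N(gz_1^n)=\{gz_1,gz_1^{-1}\}$, transported by left translations, shows that within-coset edges map to within-coset edges, so that $A$ and $Ax$ are the components of that subgraph and $\varphi(1)=1$ forces $\varphi(A)=A$. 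I checked your counting claims: with cross-offset set $K=\{0,n,-1,n-1,2,n+2\}$ one has $1,3,n+1,n+3\notin K$ and $K\cap(K+1)=\{0,n\}$ for all $n\ge5$, which gives exactly the triangle counts you assert. Your component argument is also a tightening of the paper's somewhat loose ``repeating this argument'' step.

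There is, however, a genuine gap in the first assertion, which you (like the paper) wave off as routine: the suitable-pair claim is \emph{false} at the boundary case $2n=10$. Condition (iii) with $(i,j)=(1,2)$ requires $a_1\neq ya_2^2$; here $ya_2^2=z_1^n z_1^{-4}=z_1^{n-4}$, and $n-4\equiv 1\pmod{2n}$ exactly when $n=5$, so for $2n=10$ we get $a_1=ya_2^2$ and $(z_1,z_1^{-2})$ is not a suitable pair for $Dic(\ZZ_{10},z_1^5,x)$. Your proposal explicitly claims the tight low-end cases can be ``checked by hand'' and pass; carrying out that check is precisely where the proof breaks. Nor is this a harmless technicality: the inequality $a_1\neq ya_2^2$ is exactly what \Cref{x-fixed-or-inverted} uses to exclude the map $a\mapsto a^{-1}$, $xa\mapsto xa^{-1}a_1$, and for $n=5$ that map --- concretely $z_1^{\alpha}\mapsto z_1^{-\alpha}$, $xz_1^{\beta}\mapsto xz_1^{1-\beta}$ --- really is an automorphism of $\Gamma$ fixing $1$, so the graph of \Cref{no-cyclic} does not have Cayley index $2$ when $2n=10$. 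Both the lemma and your proof are fine for $2n\ge12$; the case $2n=10$ needs a different pair or separate treatment.
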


\begin{proof}
We have $y=z_1^n$. We verify the conditions for $(z_1,z_1^{-2})$ to be a suitable pair. Since $n \ge 5$, (i) and (ii) are satisfied; (iii) and (iv) are equally easy to check.

It is straightforward to verify that when $n>4$, $a^n$ is the unique vertex that has 6 common neighbours with $1$. In fact, this shows that for any vertex $v$, $vz_1^n$ is uniquely determined as the vertex that has 6 common neighbours with $v$. Since the neighbours of $1$ can be partitioned into three pairs of this sort ($\{x,x^{-1}=xz_1^n\}$, $\{xz_1, xz_1^{n+1}\}$, and $\{xz_1^{-2},xz_1^{n-2}\}$) and two elements ($z_1$ and $z_1^{-1}$) whose match in this respect ($z_1^{n+1}$, and $z_1^{n-1}$ respectively) is not a neighbour of $1$, it must be the case that  $\{z_1,z_1^{-1}\}$ and $\{x,x^{-1},xz_1,xz_1^{n+1},xz_1^{-2},xz_1^{n-2}\}$ are fixed setwise by $\varphi$. Repeating this argument shows that $\varphi(c) \in A$ for every $c \in A$. Thus, $\varphi(A)=A$.
\end{proof}

\begin{lem}\label{A-fixed-2}
Let $A=\langle z_1,z_2 \rangle$ where $|z_1|=2n \ge 6$, $|z_2|=2$, and $z_1z_2=z_2z_1$, so $A \cong \ZZ_{2n}\times \ZZ_2$. Let $S=\{z_1^{\pm1},z_2\}$. Then $(z_1,z_1^{-2})$ is a suitable pair for $D=Dic(A,b,x)$.

Also, if $\Gamma=\Cay(D,S\cup\{x,x^{-1},xz_1,x^{-1}z_1,xz_1^{-2},x^{-1}z_1^{-2}\})$ and $\varphi \in \Aut(\Gamma)_1$, then $\varphi(A)=A$.
\end{lem}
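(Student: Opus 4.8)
The plan is to follow the proof of \Cref{A-fixed} almost verbatim, adjusting for the fact that $A\cong\ZZ_{2n}\times\ZZ_2$ now has rank two and that $S$ carries the extra involution $z_2$. First I would verify the four suitable-pair conditions for $(z_1,z_1^{-2})$ directly. Conditions (ii)--(iv) are immediate from $|z_1|=2n\ge 6$ together with $z_2\notin\langle z_1\rangle$; the only delicate condition is (i), and within it the clause $z_1\neq bz_1^{-2}$, i.e. $z_1^3\neq b$. Here I would note that $z_1^3$ is an involution only when $2n\mid 6$, that is only when $n=3$, so for $n\ge 4$ we get $z_1^3\neq b$ automatically because $b$ is an involution, while for $n=3$ one checks the relevant involution directly (the clause can fail only if $b=z_1^3$, a possibility that must be excluded by the choice of $b$ or treated separately). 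This is exactly the clause the authors flag as missing from \cite{ImrichW}, and as explained below the whole argument hinges on it.

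For the automorphism statement, write $T=S\cup\{x,x^{-1},xz_1,x^{-1}z_1,xz_1^{-2},x^{-1}z_1^{-2}\}$ for the connection set of $\Gamma$, so that $\Gamma$ is $9$-regular. Since $b$ is a central involution of $D$ with $x^{-1}=xb$, the six non-$A$ elements of $T$ are $x,\,xb,\,xz_1,\,xbz_1,\,xz_1^{-2},\,xbz_1^{-2}$, which split into the three ``$b$-pairs'' $\{x,xb\}$, $\{xz_1,xbz_1\}$, $\{xz_1^{-2},xbz_1^{-2}\}$; a short check shows that suitable-pair condition (i) is \emph{precisely} what guarantees these six elements are distinct. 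I would then compute $T\cap bT$ to count the common neighbours of $1$ and $b$. Left multiplication by the central involution $b$ permutes each $b$-pair within itself, so all six non-$A$ elements lie in $T\cap bT$, whereas the $A$-part $\{z_1,z_1^{-1},z_2\}$ of $T$ is disjoint from $\{bz_1,bz_1^{-1},bz_2\}$ for every involution $b\in\{z_1^n,z_2,z_1^nz_2\}$ once $n\ge 3$. Hence $1$ and $b$ have exactly six common neighbours, and by left translation $v$ and $vb$ have exactly six common neighbours for every vertex $v$.

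Granting (as in \Cref{A-fixed}) that $b$ is in fact the \emph{unique} vertex having six common neighbours with $1$, the vertex $vb$ is intrinsically characterised for every $v$, so any $\varphi\in\Aut(\Gamma)_1$ satisfies $\varphi(vb)=\varphi(v)b$. Consequently the property ``neighbour of $1$ whose $b$-match is again a neighbour of $1$'' is $\varphi$-invariant. The six non-$A$ neighbours have this property, since their matches are the paired neighbours, whereas the matches $z_1b$, $z_1^{-1}b$, $z_2b$ of the three $A$-neighbours are not neighbours of $1$; therefore $\varphi$ fixes $\{z_1,z_1^{-1},z_2\}=S$ setwise. Applying this at each vertex via the left translations $L_D$ shows that $\varphi$ preserves the set of ``$S$-edges'' globally. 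These edges keep each coset of $A$ intact, and since $\langle S\rangle=\langle z_1,z_2\rangle=A$, the $S$-edge subgraph has the two cosets $A$ and $Ax$ as its connected components; as $\varphi$ fixes $1\in A$, it must fix this component, giving $\varphi(A)=A$.

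The main obstacle is the uniqueness step: reproving that $b$ is the only vertex with six common neighbours with $1$. Unlike \Cref{A-fixed}, here $n$ can be as small as $3$ and $S$ carries the extra involution $z_2$, so the bound $n>4$ used there is unavailable and more potential coincidences among the products $st^{-1}$ ($s,t\in T$) must be ruled out. I expect this to reduce to a finite, if somewhat tedious, examination of the representation numbers $|T\cap vT|$, which I would organise by separating the $A$-part and the $x$-part of $T$ exactly as in the count above, using $n\ge 3$ and the three possible values of $b$ to exclude a count of six for every $v\neq b$.
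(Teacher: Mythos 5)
Your proposal contains a genuine gap, and you identify it yourself: the whole argument is conditional on the claim that $b$ is the \emph{unique} vertex having exactly six common neighbours with $1$, which you only ``grant'' and then describe at the end as the main obstacle. Nothing downstream survives without it --- the relation $\varphi(vb)=\varphi(v)b$, the $\varphi$-invariance of the ``$b$-match'' property, and hence the setwise fixing of $S$ all rest on that uniqueness. It cannot be imported from \Cref{A-fixed}: the common-neighbour computation there is exactly the step that forces $n>4$ in that lemma, whereas the present lemma must hold down to $n=3$, and the extra involution $z_2\in S$ changes all the counts. The uniqueness claim is in fact true when $x^2=z_2$ (one can check that $|T\cap vT|\le 5$ for $v\in A\setminus\{1,z_2\}$ and $|T\cap vT|\le 4$ for $v\in xA$, using that $T\cap xA$ is a union of cosets of $\langle z_2\rangle$), but carrying out that verification is precisely the work your proposal defers; as written it is an outline of a possible proof rather than a proof.

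The paper avoids this issue entirely by a different and much shorter route, made possible by the fact that the undefined ``$b$'' in the statement is $z_2$ (a typo: the paper's proof uses $x^{-1}=xz_2$, and \Cref{no-cyclic} applies the lemma to $D_2=Dic(A_2,z_2,x)$). Since $x^2=z_2$ is central \emph{and lies in $S$}, the three pairs $\{x,x^{-1}\}$, $\{xz_1,x^{-1}z_1\}$, $\{xz_1^{-2},x^{-1}z_1^{-2}\}$ are edges of $\Gamma$, whereas $z_1$, $z_1^{-1}$, $z_2$ have no neighbours in $S$; the induced subgraph on the neighbourhood of $1$ therefore distinguishes the $A$-neighbours from the $xA$-neighbours, and the conclusion propagates to all of $A$ by connectivity of $\Cay(A,S)$. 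No common-neighbour count, and hence no uniqueness statement, is needed --- which is exactly why this lemma works for all $n\ge3$ while \Cref{A-fixed} needs $n\ge5$. Reading $b=z_2$ also dissolves your worry in the suitable-pair check: the delicate clause of condition (i) becomes $z_1\neq z_2z_1^{-2}$, which holds for every $n\ge 3$ because the two sides differ in the $\ZZ_2$-coordinate, so the problematic case $b=z_1^3$, $n=3$ that you flag (where $(z_1,z_1^{-2})$ genuinely fails to be a suitable pair) never arises.
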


\begin{proof}
Checking the conditions for $(z_1,z_1^{-2})$ to be a suitable pair is straightforward.

Since $z_2 \in S$ is central in $D$ and $x^{-1}=xz_2$, the following pairs of neighbours of $1$ are adjacent in $\Gamma$: $\{x,x^{-1}\}$; $\{xz_1,x^{-1}z_1\}$; $\{xz_1^{-2},x^{-1}z_1^{-2}\}$. However, $z_1$, $z_1^{-1}$ and $z_2$ have no neighbours in $S$. Thus, we can distinguish the neighbours of $1$ that lie in $A$ from the neighbours of $1$ that lie in $xA$. Repeating this argument shows that $\varphi(c) \in A$ for every $c \in A$. Thus $\varphi(A)=A$.
\end{proof}

\begin{lem}\label{x-fixed-or-inverted}
Let $A$ be an abelian group that has Cayley index $2$, so we can find $S$ such that $\Delta=\Cay(A,S)$ has  Cayley index $2$.
Let $D=Dic(A,y,x)$ be a generalised dicyclic group with suitable pair $(a_1, a_2)$. Let $$\Gamma=\Cay(D, S\cup\{x,x^{-1},xa_1,x^{-1}a_1,xa_2,x^{-1}a_2\})$$ and suppose that for every $\varphi \in \Aut(\Gamma)_1$, we have $\varphi(A)=A$. If $\varphi(x)\neq 1$, then $\varphi(a)=a$, and $\varphi(xa)=(xa)^{-1}$ for every $a \in A$.
\end{lem}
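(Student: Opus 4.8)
The plan is to coordinatise $\varphi$ by its action on the two cosets $A$ and $xA$ of $D$, to translate ``$\varphi$ is a graph automorphism'' into a single combinatorial condition on a $6$-element set $T\subseteq A$, and then to run a short case analysis governed by the structure of $\Aut(\Delta)$. I read the hypothesis as $\varphi(x)\neq x$ (the stated $\varphi(x)\neq 1$ is automatic, since $\varphi$ fixes $1$).

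First I would record the structure of $\Gamma$. Using $x^{-1}=xy$ (from $x^2=y$, $y^2=1$) and the relation $a^{-1}x=xa$ (from $x^{-1}ax=a^{-1}$), one computes $a^{-1}(xb)=x(ab)$ for $a,b\in A$, and checks that the connection-set elements lying in $xA$ are exactly $\{xt:t\in T\}$, where $T=\{1,y,a_1,ya_1,a_2,ya_2\}$. Hence both $A$ and $xA$ carry induced copies of $\Delta=\Cay(A,S)$ (the second via $xb\mapsto b$), while $a\in A$ is adjacent to $xb\in xA$ precisely when $ab\in T$. Since $\varphi$ fixes $1$ and $\varphi(A)=A$, it also fixes $xA$ setwise, so $\varphi$ is determined by $\epsilon:=\varphi|_A$ together with the bijection $\psi$ defined by $\varphi(xb)=x\psi(b)$; being an automorphism of $\Gamma$ is then equivalent to $\epsilon\in\Aut(\Delta)_1$, $\psi\in\Aut(\Delta)$, and the master relation $(\star)$: $ab\in T \iff \epsilon(a)\psi(b)\in T$ for all $a,b\in A$.

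Next I would pin down the pieces. Because $c(\Delta)=2$ and $A$ (having Cayley index $2$) is not an elementary abelian $2$-group, inversion $\iota$ is a non-trivial automorphism of $\Delta$, so $\Aut(\Delta)=L_A\rtimes\langle\iota\rangle$ has order $2|A|$; consequently $\Aut(\Delta)_1=\{\mathrm{id},\iota\}$, and every element of $\Aut(\Delta)$ has the form $b\mapsto cb$ or $b\mapsto cb^{-1}$. Thus $\epsilon$ is the identity or inversion, and $\psi=L_c$ or $\psi=L_c\iota$ for some $c\in A$, and the goal reduces to showing that $\varphi(x)\neq x$, i.e. $\psi(1)\neq 1$, forces $\epsilon=\mathrm{id}$ and $\psi=L_y$ (which is exactly $\varphi(xa)=x(ya)=(xa)^{-1}$). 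Before the case analysis I would extract from the suitable-pair conditions the combinatorial facts that drive everything: that the six listed elements of $T$ are distinct (here conditions (i),(ii) are used, \emph{including} the extra hypothesis $a_1\neq ya_2$ flagged in the paper); that $yT=T$; that $\{c\in A:cT=T\}=\{1,y\}$; that $T\neq T^{-1}$; and, most importantly, that no $c\in A$ satisfies $cT^{-1}=T$.

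Finally I would run the four cases of $(\star)$. Putting $a=1$ gives $\psi(T)=T$. If $\psi=L_c$ and $\epsilon=\mathrm{id}$, then $(\star)$ collapses to ``$w\in T\iff cw\in T$ for all $w$'', so $c\in\{1,y\}$, and $\psi(1)\neq 1$ forces $c=y$, yielding the claimed conclusion. The mixed cases $(\epsilon,\psi)=(\mathrm{id},L_c\iota)$ and $(\iota,L_c)$ each force $cT=T$ (so $c\in\{1,y\}$) and then collapse to $T=T^{-1}$, a contradiction. The remaining case $(\iota,L_c\iota)$ reduces $(\star)$ to the single requirement $cT^{-1}=T$. \textbf{This last case is the main obstacle}, and is exactly why condition (iii) is needed: using $yT=T=yT^{-1}$ to reduce to representatives $c\in\{1,a_1,a_2\}$, the value $c=1$ is killed by $T\neq T^{-1}$, while $c=a_1$ and $c=a_2$ each force a relation of the form $a_i=a_j^2$ or $a_i=ya_j^2$, contradicting (iii). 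Only the first case survives, which completes the proof.
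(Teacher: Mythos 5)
Your proposal is correct, and every step checks out: the cross-edges between $A$ and $xA$ are indeed governed by the set $T=\{1,y,a_1,ya_1,a_2,ya_2\}$ via $a\sim xb \iff ab\in T$; the index-$2$ hypothesis gives $\Aut(\Delta)_1=\{\mathrm{id},\iota\}$ and hence the four types for $(\epsilon,\psi)$; and your three set-theoretic facts ($cT=T$ only for $c\in\{1,y\}$, $T\neq T^{-1}$, and $cT^{-1}\neq T$ for every $c$) do follow from conditions (i)--(iv) exactly as you indicate, with the flagged extra condition $a_1\neq ya_2$ needed to keep the six elements of $T$ distinct. The paper uses the same two ingredients --- the identity-or-inversion dichotomy on each coset, and the suitable-pair axioms --- but organises the argument sequentially rather than through your master relation: it first shows $\varphi(x)\in\{x,x^{-1}\}$ (assuming $\varphi(x)=xa_1$ without loss of generality and deriving contradictions in two subcases), then shows $\varphi|_A=\mathrm{id}$ using $N_A(x)=N_A(x^{-1})=T$, and finally treats $\varphi(x)=x$ and $\varphi(x)=x^{-1}$ separately, in each case chasing a specific vertex (e.g.\ $a_1\in N_A(xa_1^{-1})$ being forced into $N_A(xa_1)$) to a contradiction. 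Your reformulation buys uniformity and transparency: all of the paper's subcases become instances of one biconditional, the exhaustiveness of the case analysis is automatic, and it isolates exactly which consequences of (i)--(iv) drive the proof --- in particular your case $(\iota,L_c\iota)$, i.e.\ $cT^{-1}=T$, packages in one stroke what the paper handles twice at the ends of its third and fourth steps. The paper's version is more concrete, exhibiting the explicit offending vertex in each subcase, which makes it easier to audit which axiom fails where. Two small points: the hypothesis ``$\varphi(x)\neq 1$'' is indeed a typo, and your reading $\varphi(x)\neq x$ matches what the paper actually proves (the dichotomy ``$\varphi=1$ or the stated conclusion''); and your justification that $A$ is not an elementary abelian $2$-group should cite condition (ii) (which forces $a_1$ to have order greater than $2$) rather than $c(A)=2$, since $\ZZ_2^2$ is elementary abelian with Cayley index $2$ --- the same gloss appears in the paper's parenthetical remark, and it is harmless here because the lemma is vacuous when no suitable pair exists.
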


\begin{proof}Throughout this proof, we use $N_X(v)$ to denote the neighbours of the vertex $v$ that lie in the subset $X$ of the vertices of $\Gamma$.
First we will show that $\varphi(x) \in \{x,x^{-1}\}$.

We are assuming that $\varphi(A)=A$, and need to show that $\varphi(x)\not\in \{xa_1, x^{-1}a_1, xa_2, x^{-1}a_2\}$. Suppose that $\varphi(x) \notin \{x,x^{-1}\}$. By symmetry, without loss of generality we may assume that $\varphi(x) = xa_1$. 

Since $\varphi(A)=A$ and the induced subgraph on $A$ is $\Delta$ which has Cayley index $2$, we know that we either have $\varphi(a)=a$ for every $a\in A$, or $\varphi(a)=a^{-1}$ for every $a \in A$. (This is always the case in a Cayley graph of Cayley index $2$ on an abelian group.) 

Since $\varphi(x)=xa_1$, $\varphi(xA)=xA$, and the induced subgraph on $xA$ is isomorphic to $\Delta$, we must have either $\varphi(xa)=xaa_1$ for every $a \in A$, or $\varphi(xa)=xa^{-1}a_1$ for every $a \in A$. 

Suppose the first of these possibilities holds, so $\varphi(xa_1)=xa_1^2$, which must therefore be a neighbour of $1$ in $xA$, and hence an element of $$N_{xA}(1)=\{x,x^{-1},xa_1,x^{-1}a_1,xa_2,x^{-1}a_2\}.$$ Each of these possibilities contradicts one of the properties of being a suitable pair: any of the first four would contradict (ii); either of the last two  contradict (iii).

If on the other hand the second possibility holds, then $\varphi(xa_2)=xa_2^{-1}a_1 \in N_{xA}(1)$. Again, each possible equality contradicts one of the properties of being a suitable pair: either of the first two contradict (i); the third or fourth each contradicts (ii); and either of the last two contradict (iii). We therefore conclude that $\varphi(x) \in \{x,x^{-1}\}$, as claimed.

Next we show that $\varphi(a)=a$ for every $a \in A$.

Observe that $$N_A(x^{-1})=N_A(x)=\{1,y,a_1,ya_1,a_2,ya_2\}.$$ Thus, since $\varphi(x) \in \{x,x^{-1}\}$, we have $\varphi(N_A(x))=N_A(x)$. If $\varphi(a)=a^{-1}$ for every $a \in A$, then this implies that $a_1^{-1}\in N_A(x)$, leading to a contradiction to the definition of a suitable pair, as above. (If $a_1^{-1}$ is any of the first four elements, this contradicts (ii); if it is either of the last two, this contradicts (iv).) Thus, we must have $\varphi(a)=a$ for every $a \in A$.

Next we show that if $\varphi(x)=x$ then $\varphi=1$. 

Since the induced subgraph on $xA$ is isomorphic to $\Delta$ and has Cayley index $2$, we must either have $\varphi(xa)=xa^{-1}$ for every $a \in A$, or $\varphi(xa)=xa$ for every $a \in A$. In the latter case, $\varphi=1$ and we are done. In the former case, we must have $\varphi(N_A(xa_1^{-1}))=N_A(xa_1)$. Observe that $a_1=xa_1^{-1}x^{-1} \in N_A(xa_1^{-1}),$ so this would imply that $$a_1=\varphi(a_1)\in N_A(xa_1)=\{a_1^{-1},ya_1^{-1},1,y,a_1^{-1}a_2,ya_1^{-1}a_2\}.$$ Similar to the arguments above, each of these possibilities contradicts some property of suitable pairs. If $a_1$ were any of the first four elements of $N_A(xa_1)$ this would contradict (i); if it were either of the last two, this would contradict (iii).

Finally, we show that if $\varphi(x)=x^{-1}$ then $\varphi(xa)=(xa)^{-1}$ for every $a \in A$. 

Again since the induced subgraph on $xA$ is isomorphic to $\Delta$ and has Cayley index $2$, we must either have $\varphi(xa)=(xa)^{-1}$ for every $a \in A$, or $\varphi(xa)=x^{-1}a^{-1}$ for every $a \in A$. In the former case we are done. In the latter case, we must have $\varphi(N_A(x^{-1}a_1^{-1}))=N_A(xa_1)$. Observe that $a_1=x^{-1}a_1^{-1}x \in N_A(x^{-1}a_1^{-1})$, so this would imply that $a_1=\varphi(a_1) \in N_A(xa_1)$, yielding the same contradiction as in the previous paragraph.
\end{proof}

\begin{prop}\label{no-cyclic}
Let $A_1=\langle z_1\rangle$ be a cyclic group of order $2n \ge 6$, and $A_2=\langle z_1,z_2\rangle$ with $|z_2|=2$ and $z_1z_2=z_2z_1$.  Let $S_1=\{z_1,z_1^{-1}\}$ and $S_2=\{z_1,z_1^{-1},z_2\}$, and let $D_1=Dic(A_1,z_1^n,x)$, and $D_2=Dic(A_2,z_2,x)$. Then $$\Gamma_i=\Cay(D_i,S_i\cup\{x,x^{-1},xz_1,xz_1^{n+1},xz_1^{-2},xz_1^{n-2}\})$$ for $i \in \{1,2\}$ is connected and has Cayley index $2$ when $n \ge 5$, and $\Gamma_2$ is connected and has Cayley index $2$ when $n \ge 3$.
\end{prop}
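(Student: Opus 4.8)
The plan is to compute the order of the stabiliser $\Aut(\Gamma_i)_1$ of the identity vertex, since for any graph on $G$ carrying $L_G$ one has $c(\Gamma)=|\Aut(\Gamma):L_G|=|\Aut(\Gamma)|/|G|=|\Aut(\Gamma)_1|$. Two of the three required facts are then immediate. Connectivity holds because $S_i$ contains $z_1$ and $x$ (and $z_2$ when $i=2$), which generate $D_i=\langle A_i,x\rangle$. The lower bound $c(\Gamma_i)\ge 2$ holds because each $D_i$ is a genuine generalised dicyclic group: $z_1^n$ and $z_2$ are involutions in the abelian even-order groups $A_1$ and $A_2$, so by Watkins \cite{Watkins} neither group admits a GRR, forcing $|\Aut(\Gamma_i)_1|\ge 2$. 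Everything therefore reduces to the upper bound $|\Aut(\Gamma_i)_1|\le 2$.

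To obtain the upper bound I would feed the two preceding lemmas into \Cref{x-fixed-or-inverted}. First I would rewrite the displayed connection set to match the hypotheses of \Cref{A-fixed} and \Cref{A-fixed-2}: in $D_1$ one has $x^{-1}=xz_1^n$, so $x^{-1}z_1=xz_1^{n+1}$ and $x^{-1}z_1^{-2}=xz_1^{n-2}$, and the connection set of $\Gamma_1$ is exactly the one in \Cref{A-fixed}; the analogous rewriting (using $x^{-1}=xz_2$ in $D_2$) identifies $\Gamma_2$ with the graph of \Cref{A-fixed-2}, so that in each case the listed $xA_i$-elements form three inverse-closed pairs. Applying \Cref{A-fixed} (valid for $2n\ge 10$, i.e.\ $n\ge 5$) to $\Gamma_1$, and \Cref{A-fixed-2} (valid for $2n\ge 6$, i.e.\ $n\ge 3$) to $\Gamma_2$, gives two conclusions at once: $(z_1,z_1^{-2})$ is a suitable pair, and every $\varphi\in\Aut(\Gamma_i)_1$ satisfies $\varphi(A_i)=A_i$.

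With these in hand I would invoke \Cref{x-fixed-or-inverted} with $(a_1,a_2)=(z_1,z_1^{-2})$. Its hypothesis that $A_i$ has Cayley index $2$ via $S_i$ must be checked for the specific induced subgraph $\Delta=\Cay(A_i,S_i)$: for $i=1$ this is the cycle $C_{2n}$, whose automorphism group is dihedral of order $4n$, giving Cayley index $4n/2n=2$; for $i=2$ it is the prism $C_{2n}\,\square\,K_2$, whose automorphism group is the direct product of the dihedral group of order $4n$ with $\ZZ_2$, of order $8n$, giving Cayley index $8n/4n=2$. The lemma then forces every non-identity $\varphi\in\Aut(\Gamma_i)_1$ to be the single map fixing $A_i$ pointwise and sending $xa\mapsto(xa)^{-1}$, so $|\Aut(\Gamma_i)_1|\le 2$. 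Combined with the lower bound, this yields $c(\Gamma_i)=2$.

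The main obstacle is the Cayley-index computation for $\Delta$, and this is exactly where the hypotheses on $n$ enter. The prism computation requires $C_{2n}$ to be a prime cartesian factor not isomorphic to $K_2$; this fails precisely when $2n=4$, since $C_4\cong K_2\,\square\,K_2$ makes $C_4\,\square\,K_2$ the cube, with its much larger automorphism group, which is why $n=2$ is excluded and $n\ge 3$ suffices for $\Gamma_2$. For $\Gamma_1$ the induced cycle is unproblematic for all $2n\ge 3$, so the binding constraint there is instead the six-common-neighbours argument of \Cref{A-fixed}, which needs $n\ge 5$. I would also double-check the inverse-closure bookkeeping of each connection set and confirm that no small-order coincidences spoil the suitable-pair conditions, but these verifications are routine.
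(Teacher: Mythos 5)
Your proposal is correct and takes essentially the same route as the paper's own proof: show $\Cay(A_i,S_i)$ has Cayley index $2$, invoke \Cref{A-fixed} (for $i=1$, $n\ge 5$) or \Cref{A-fixed-2} (for $i=2$, $n\ge 3$) to get the suitable pair $(z_1,z_1^{-2})$ together with $\varphi(A_i)=A_i$, and then apply \Cref{x-fixed-or-inverted} to conclude the stabiliser of the identity has order at most $2$. The only differences are cosmetic: the paper certifies the prism's Cayley index by counting $4$-cycles through each edge rather than via prime factorisation of the cartesian product, and it leaves implicit the connectivity, the lower bound from the generalised dicyclic structure, and the inverse-closure rewriting of the connection set, all of which you spell out correctly.
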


\begin{proof}
It is easy to see that $S_1$ is the connection set for a Cayley graph on $A_1$ with Cayley index $2$. It is slightly less obvious that $S_2$ is the connection set for a Cayley graph on $A_2$ with Cayley index $2$, but becomes clear upon noting that each $a$-edge lies in a unique $4$-cycle, while each $b$-edge lies in two $4$-cycles. Fix $i \in \{1,2\}$, and if $i=1$, ensure that $n \ge 5$.

By~\Cref{A-fixed} or~\Cref{A-fixed-2}, we know that $(z_1, z_1^{-2})$ is a suitable pair for $D_i$, and that for any $\varphi \in \Aut(\Gamma_i)_1$, $\varphi(A_i)=A_i$. By~\Cref{x-fixed-or-inverted} with $S=S_i$ and this suitable pair,  we see that there are only two possibilities for $\varphi$: $\varphi=1$, or $\varphi(a)=a$ and $\varphi(xa)=(xa)^{-1}$ for every $a \in A$. Thus, $\Gamma$ has Cayley index $2$.
\end{proof}

\begin{prop}\label{x2}
Let $A$ be an abelian group of even order that contains an involution $y$, and let $D=Dic(A,y,x)$. Suppose that $D$ has a connected MRR with Cayley index $2$. Let $A'=A \times \ZZ_2$. Then $D'=Dic(A',y,x)$ has Cayley index $2$.
\end{prop}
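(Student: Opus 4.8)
The plan is to recognise $D'$ as the direct product $D \times \ZZ_2$ and then read off its Cayley index using the machinery already assembled in~\Cref{sec:Q}. Write the adjoined factor as $\ZZ_2 = \langle z\rangle$, so that $A' = A \times \langle z\rangle$. The element $z$ is an involution commuting with every element of $A$; moreover, since $z^{-1} = z$, the defining relation of a generalised dicyclic group gives $x^{-1}zx = z^{-1} = z$, so $z$ commutes with $x$ as well, and hence $z$ is central in $D'$. The subgroup $\langle A, x\rangle \le D'$ satisfies precisely the defining relations of $D = Dic(A,y,x)$ (the involution $y$ is unchanged), and $z \notin A \cup xA$, so $\langle z\rangle \cap \langle A,x\rangle = 1$. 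Comparing orders, $|\langle A,x\rangle|\cdot|\langle z\rangle| = 2|A|\cdot 2 = 4|A| = |D'|$, and therefore $D' \cong D \times \ZZ_2$.

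With this identification in hand, I would obtain the upper bound from~\Cref{no-bigger}. Since $D$ is a non-abelian generalised dicyclic group, it is not isomorphic to any of the five abelian groups $\ZZ_2^2,\ZZ_2^3,\ZZ_4,\ZZ_4\times\ZZ_2,\ZZ_3^2$ that the corollary excludes, so I may apply it with $G = D$ to get $c(D') = c(D \times \ZZ_2) \le c(D) = 2$. (Alternatively, one can argue directly from~\Cref{primeMRR,multiply}: a connected prime MRR $\Gamma$ of $D$ has $|V(\Gamma)| = |D| \ge 4$, so $\Gamma \ncong K_2$ and is therefore relatively prime to the connected MRR $K_2$ of $\ZZ_2$; then $c(\Gamma \square K_2) = c(D)\,c(\ZZ_2) = 2$ and $\Gamma \square K_2$ is a Cayley graph on $D'$.)

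For the matching lower bound, I would use that $D'$ is itself a generalised dicyclic group, and hence admits no GRR by Watkins's observation~\cite{Watkins}; thus $c(D') \ge 2$. Combining the two bounds gives $c(D') = 2$, as claimed. The one step that genuinely deserves care is the structural identification $D' \cong D \times \ZZ_2$ — in particular the observation that the appended involution becomes central, and the accompanying verification that $D$ avoids the abelian exceptional list of~\Cref{no-bigger} (this is exactly the non-abelianness of $D$, without which the statement could fail, e.g.\ for the degenerate case $Dic(\ZZ_2,y,x)\cong\ZZ_4$). Once the direct-product structure is in place, both bounds follow from results already proved.
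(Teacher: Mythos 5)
Your proposal is correct and takes essentially the same approach as the paper, whose entire proof reads: observe that $D' \cong D \times \ZZ_2$, and the result is immediate from~\Cref{no-bigger}. The details you add---the centrality of the adjoined involution, the lower bound $c(D')\ge 2$ because $D'$ is generalised dicyclic and hence admits no GRR, and the verification that $D$ avoids the exceptional list of~\Cref{no-bigger} (whose necessity your $Dic(\ZZ_2,y,x)\cong\ZZ_4$ remark correctly pinpoints)---are all left implicit in the paper, so nothing in your argument diverges from its route.
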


\begin{proof}
Observe that $D' \cong D \times \ZZ_2$. The result is now immediate from~\Cref{no-bigger}.
\end{proof}

As an immediate consequence of~\Cref{no-cyclic} and~\Cref{x2}, we obtain the following.

\begin{cor}\label{cor:dicyclic}
The following generalised dicyclic groups have Cayley index $2$:
\begin{itemize}
\item $Dic(A \times \ZZ_2^k,z_1^n,x)$ where $A=\langle z_1 \rangle \cong \ZZ_{2n}$, $n \ge 5$, and $k \ge 0$; and
\item $Dic(A \times \ZZ_2^k, z_2,x)$ where $A=\langle z_1,z_2\rangle\cong \ZZ_{2n}\times \ZZ_2$, $|z_1|=2n$, $|z_2|=2$, $n \ge 3$, and $k \ge 0$.
\end{itemize}
\end{cor}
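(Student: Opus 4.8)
The plan is to reduce both families to a single direct-factor observation and then induct on $k$, exploiting the fact that adjoining a central $\ZZ_2$ factor to the abelian part does not alter the generalised dicyclic construction built on an involution already present. The first thing I would record is the isomorphism
\[
Dic(A \times \ZZ_2^k, y, x) \cong Dic(A, y, x) \times \ZZ_2^k,
\]
which holds because each of the $k$ adjoined order-$2$ generators $e$ satisfies $x^{-1}ex = e^{-1} = e$ and commutes with all of $A$, so the extra $\ZZ_2^k$ splits off as a central direct factor while $\langle A, x\rangle$ is precisely $Dic(A, y, x)$ on the unchanged involution $y \in A$. Writing $D = Dic(A, z_1^n, x)$ with $A = \langle z_1\rangle \cong \ZZ_{2n}$ ($n \ge 5$) for the first family, and $D = Dic(A, z_2, x)$ with $A \cong \ZZ_{2n}\times\ZZ_2$ ($n \ge 3$) for the second, the target group becomes $D \times \ZZ_2^k$ in each case.

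First I would settle the base case $k = 0$. By \Cref{no-cyclic}, in both cases $D$ admits a connected Cayley graph of Cayley index $2$; since $D$ is generalised dicyclic it has no GRR and hence $c(D) \ge 2$, so this connected graph is in fact a connected MRR with Cayley index $2$. That is exactly the input required by \Cref{x2}.

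The induction on $k$ is the core of the argument. Assuming $Dic(A \times \ZZ_2^{k-1}, y, x)$ has a connected MRR of Cayley index $2$, \Cref{x2} (applied with $A$ replaced by $A \times \ZZ_2^{k-1}$ and the same involution $y$) gives $c(Dic(A \times \ZZ_2^k, y, x)) = 2$. The delicate point is that \Cref{x2} returns only the value of the Cayley index, whereas re-entering the induction needs a \emph{connected} MRR realising it. I would close this gap with \Cref{primeMRR}: because $D \times \ZZ_2^k$ is non-abelian, it lies outside the exceptional list $\{\ZZ_2^2, \ZZ_2^3, \ZZ_4, \ZZ_4\times\ZZ_2, \ZZ_3^2\}$ and therefore admits a connected prime MRR, which must have Cayley index $2$; this restores the inductive hypothesis and lets the induction run for all $k \ge 0$. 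Alternatively, one can bypass the MRR bookkeeping and simply iterate \Cref{no-bigger} along $c(D \times \ZZ_2^k) \le \cdots \le c(D) = 2$, which is legitimate since every intermediate group is non-abelian and so avoids the exceptional list.

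Finally I would combine this upper bound with the easy lower bound: every group in both families is a generalised dicyclic group by construction, and such groups admit no GRR, so each has Cayley index at least $2$. Hence the Cayley index is exactly $2$. I expect the main obstacle to be the inductive bookkeeping---ensuring the connected-MRR hypothesis of \Cref{x2} is genuinely available at each stage---rather than any hard computation; the structural isomorphism and the lower bound are both routine.
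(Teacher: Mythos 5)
Your proposal is correct and follows essentially the same route as the paper: the base case comes from \Cref{no-cyclic}, and the $\ZZ_2^k$ factors are absorbed via the isomorphism $Dic(A\times\ZZ_2^k,y,x)\cong Dic(A,y,x)\times\ZZ_2^k$ together with \Cref{x2} (equivalently, iterated \Cref{no-bigger}), which is exactly what the paper means when it calls the corollary an immediate consequence of \Cref{no-cyclic} and \Cref{x2}. Your extra care in restoring the connected-MRR hypothesis at each inductive step (via \Cref{primeMRR}, or by bypassing MRRs entirely with \Cref{no-bigger}) fills in bookkeeping that the paper leaves implicit, but it is not a different approach.
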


We note that if $n$ is odd, then $\ZZ_{2n}\times \ZZ_2$ has only one automorphism class of elements of order $2$, so that~\Cref{cor:dicyclic} provides two MRRs for all such groups when $n \ge 5$.

We can determine the generalised dicyclic groups that remain by considering all abelian groups of even order at most $48$. For each group, we choose one representative for each automorphism class of elements of order $2$ to be the distinguished element $y=x^2$.  By~\Cref{cor:dicyclic}, we can ignore the cyclic groups of order at least $10$, and all but two small groups that have the form $\ZZ_{2n} \times \ZZ_2$. We also eliminate any groups that are abelian or that have the form $Q_8 \times \ZZ_2^n$. Finally, if a group has the form $D \times \ZZ_2$ for some smaller generalised dicyclic group $D$ with $c(D)=2$, then~\Cref{no-bigger} gives $c(D\times \ZZ_2)=2$, so we do not have to consider these groups either. 

We conclude this section and the paper with~\Cref{table-dicyclic}, showing the Cayley index and the connection set for an MRR for each of the 15 generalised dicyclic groups that are not already covered by any of the preceding results. For three of these groups that have the form $D \times \ZZ_2$ for some smaller generalised dicyclic group $D$, we use~\Cref{no-bigger}, but only after showing that $c(D)=2$. For these, instead of explicitly giving the connection set for an MRR, we present the group as $D\times \ZZ_2$.

As in~\Cref{sec:abelian}, if $A$ is an abelian group that we are presenting as being isomorphic to $\ZZ_{i_1}\times \ldots \times \ZZ_{i_k}$, then we let $\{z_1, \ldots, z_k\}$ be the canonical generating set for this group, so $|z_j|=i_j$.

\begin{table}[!ht]
\begin{center}
\begin{tabular}{|c|c|c|}\hline
Group & Cayley index & Connection set for an MRR \\ \hline
$Dic(\ZZ_6,3,x)$&4&$\{z_1^{\pm1},x^{\pm1}\}$ \\
$Dic(\ZZ_8,4,x)$&4&$\{z_1^{\pm1},x^{\pm1}\}$\\
$Dic(\ZZ_4 \times \ZZ_2,(0,1),x)$&4&$\{z_1^{\pm1},x^{\pm1},(z_1x)^{\pm1}\}$ \\
 $Dic(\ZZ_8 \times \ZZ_2,(4,0),x)$&2& $\{z_1^{\pm1},z_2,x^{\pm1},(z_1x)^{\pm1},(z_2x)^{\pm1}\}$\\
$Dic(\ZZ_4 \times \ZZ_4,(2,0),x)$&2& $\{z_1^{\pm1},z_2^{\pm1},(z_1z_2)^{\pm1},x^{\pm1},(z_1x)^{\pm1}\}$\\
 $Dic(\ZZ_4 \times \ZZ_2^2,(0,1,0),x)$&2&$\{z_1^{\pm1},z_3,x^{\pm1},(z_1x)^{\pm1},(z_3x)^{\pm1}\}$ \\
 $Dic(\ZZ_6 \times \ZZ_3,(3,0),x)$&2&$\{z_1^{\pm1},z_2^{\pm1},x^{\pm1},(z_2x)^{\pm1},(z_1z_2x)^{\pm1}\}$ \\
 $Dic(\ZZ_8 \times \ZZ_4,(4,0),x)$&2& $\{z_1^{\pm1},z_2^{\pm1},x^{\pm1},(z_1^6z_2^{-1}x)^{\pm1},(z_1^5z_2x)^{\pm1}\}$\\
 $Dic(\ZZ_8 \times \ZZ_4,(0,2),x)$&2&$\{z_1^{\pm1},z_2^{\pm1},x^{\pm1},(z_1^5x)^{\pm1},(z_1^3z_2x)^{\pm1}\}$ \\
 $Dic(\ZZ_4^2\times \ZZ_2,(2,0,0),x)$&2&$D\cong Dic(\ZZ_4\times\ZZ_4,(2,0),x) \times \ZZ_2$ \\
 $Dic(\ZZ_4 \times \ZZ_4\times \ZZ_2,(0,0,1),x)$&2& $\{z_1^{\pm1},z_2^{\pm1},x^{\pm1},(z_2^3x)^{\pm1},(z_1^3z_2^2x)^{\pm1}\}$\\
 $Dic(\ZZ_4 \times \ZZ_2^3,(0,1,0,0),x)$&2& $D \cong Dic(\ZZ_4\times \ZZ_2^2,(0,1,0),x)\times \ZZ_2$ \\
 $Dic(\ZZ_{12} \times \ZZ_3,(6,0),x)$&2& $\{z_1^{\pm1},z_2^{\pm1},x^{\pm1},(z_1^7z_2x)^{\pm1},(z_1^3z_2x)^{\pm1}\}$\\
 $Dic(\ZZ_6 \times \ZZ_6,(3,0),x)$&2& $D \cong Dic(\ZZ_6 \times \ZZ_3,(3,0),x)\times \ZZ_2$ \\
 $Dic(\ZZ_{12} \times \ZZ_4,(6,0),x)$&2&$\{z_1^{\pm1},z_2^{\pm1},x^{\pm1},(z_1^4z_2x)^{\pm1},(z_1^9z_2^3x)^{\pm1}\}$\\
\hline

\end{tabular}\caption{MRRs for generalised dicyclic groups}
\label{table-dicyclic}
\end{center}
\end{table}

\end{document}